% KMK, epub/pagination, 12/6/05
%MSC pretex 8/1/05
%MSC txcx 10/28/05
% SKN final cx 11/22/05

\documentclass[leqno,oneside]{article}

\usepackage[left=4.5cm,right=4.5cm,bottom = 3.9cm]{geometry}
\usepackage{graphics,graphicx}
\usepackage{subfigure}
\usepackage{psfrag}
\usepackage{enumerate}
\usepackage{enumitem}
\usepackage{fancyvrb}
\usepackage{amsmath,amsfonts,amssymb,amsthm}
\usepackage{esint}
\usepackage{exscale} 
\usepackage{tikz}
\usepackage{array}
\usepackage{verbatim}
\usepackage{subfigure}
\usepackage{dsfont}
\usepackage{color}
\usepackage{mathtools}
\usepackage{graphicx}
\usepackage{placeins}
\definecolor{marin}{rgb}{0.,0.3,0.7} 
\definecolor{rouge}{rgb}{0.8,0.,0.} 
\definecolor{sepia}{rgb}{0.8,0.5,0.} 
\definecolor{vert}{rgb}{0.,.55,0.}
\usepackage[colorlinks,citecolor=marin,linkcolor=rouge,
            bookmarksopen,
            bookmarksnumbered
           ]{hyperref}
\usepackage{authblk}
\usepackage{xspace}
\usetikzlibrary{arrows,matrix}
\usepackage{fourier}

\usepackage{fancyhdr}
\fancyhead{}
\fancyfoot{}
%\fancyhead[LE,RO]{\thepage}
%\fancyhead[LO]{\slshape\leftmark}
%\fancyhead[RE]{\slshape\rightmark}
\pagestyle{fancy}
\usepackage[mathcal]{eucal}

%\usepackage[notref,notcite]{showkeys}
%\usepackage{refcheck}

%%%%%

\newtheorem{thm}{Theorem}

\newtheorem{hyp}{Hypothesis}
\newtheorem{lem}[thm]{Lemma}
\newtheorem{prop}[thm]{Proposition}
\newtheorem{rmk}{Remark}
\newtheorem{defi}{Definition}

\def\secref{\S\ref}
\def\hypref{Hypothesis~\ref}
\def\lemref{Lemma~\ref}
\def\defref{Definition~\ref}
\def\theoref{Theorem~\ref}
\def\propref{Proposition~\ref}
\def\rmkref{Remark~\ref}
\def\figref{Figure~\ref}

\makeatletter
\def\blfootnote{\gdef\@thefnmark{}\@footnotetext}
\makeatother

%-----------------------------------------------------------------------------
%---------------------------Start of data file------------------------------
%-----------------------------------------------------------------------------

\author[1]{Sebastian Noelle\thanks{noelle@igpm.rwth-aachen.de}}
\author[2]{Martin Parisot\thanks{martin.parisot@inria.fr}}
\author[3]{Tabea Tscherpel\thanks{ttscherpel@math.uni-bielefeld.de}}
\affil[1]{Institute for Geometry and Applied Mathematics, RWTH Aachen, 52056 Aachen, Germany}
\affil[2]{INRIA, Univ. Bordeaux, CNRS, Bordeaux INP, IMB, UMR 5251, 200 Avenue de la Vieille Tour, 33405 Talence cedex, France}
\affil[3]{Faculty of Mathematics, Bielefeld University, 33501 Bielefeld, Germany}
%----------------------------------------Special Characters

\def\d{{\, \mathrm{d}}}
\def\ds{\displaystyle}
\def\l{\left}
\def\r{\right}
\def\({\l(}
\def\){\r)}
\def\ll{\left\langle}
\def\rr{\reft\rangle}

\def\style{\it}

\newcommand\mat[2]{\begin{array}{@{}#1@{}}#2\end{array}}

\def\u{{\overline u}}
\def\w{{\overline w}}
\def\q{{\overline q}}

\def\P{\mathcal P}
\def\Q{\mathcal Q}
\def\Rest{\mathcal R}

\def\E{\mathcal E}
\def\Gsw{\mathcal G}
\def\Ggn{\mathcal H}

\def\abs#1{\left| #1 \right|}
\newcommand{\norm}[1]{\left|\!\left| #1 \right|\!\right|}
\def\ll{\left\langle}
\def\rr{\right\rangle}
\def\dx{\,\mathrm{d}x}

\def\diver{\nabla \cdot}
\def\divergence{\mathrm{div}}
\def\dtn{\delta_t^n}
\def\dt{\delta_t}

\def\R{\mathbb{R}}
\def\Eset{\mathbb{E}}
\def\I{\mathbf I}

\def\LRh{L^2\left(\R^d;h\right)}
\def\Lh{L^2\left(\Omega;h\right)}
\def\Ahb{\mathbb{A}_h}
\def\Ahgu{\mathbb{A}_{h,\Gamma_u}}
\def\Ahg{\mathbb{A}_{h,\Gamma}}

\def\mAhgu{\mathcal{A}_{h,\Gamma_u}}
\def\mAhguu{\mathcal{A}_{h,\Gamma_u}\left(\bcu\right)}

\def\Hdiv{H(\divergence;\Omega)}

\def\mHghq{\mathcal{H}_{\Gamma_{hq}}}
\def\Gsim{\stackrel{\Gamma}{\equiv}}
\def\Gusim{\stackrel{\Gamma_u}{\equiv}}

\def\Ur{U^r}
\def\ur{\u^r}
\def\qr{\q^r}

\def\Pih{\Pi_h[\Ahb]}
\def\Pihg{\Pi_h[\Ahg]}
\def\Gc{{\Gamma^{\mathrm{c}}}}

\def\bcu{\widetilde{u}}	
\def\bchq{\widetilde{hq}} 	
\def\bch{\widetilde{h}}	
\def\bchu{\widetilde{hu}}
\def\bcv{\widetilde{v}}	
\def\bcw{\widetilde{w}}	

\def\T{\mathbb{T}}
\def\W{\mathbb{W}}
\def\G{\mathbb{G}}
\def\Fd{\mathbb{F}}
\def\F{\mathcal{F}}
\def\Sd{\mathcal{S}}
\def\Ahd{\mathbb A_{h_\star}^\delta}
\def\Ahdg{\mathbb A_{h_\star,\Gamma}^\delta}
\def\m{m}
\def\Lhd{\ell^2\(\T;h_\star\)}
\def\kf{{k_f}}
\def\kg{{k_g}}
\def\ki{{k_i}}
\def\n{\nu}
\def\t{\tau}
\def\nablad{\nabla^\delta}
\def\Gammad{\Gamma^\delta} 

\def\dof{\mathbb{D}}
\def\dofh{\dof^h}
\def\dofu{\dof^{\u}}
\def\dofw{\dof^{\w}}
\def\dofsig{\dof^{\sigma}}
\def\dofa{\dof^{a}}
\def\dofU{\dof^{U}}

\def\Ccfl{C_{\textrm{cfl}}}
% soliton wave 
\def\sol{\textrm{sol}}
\def\hsol{h_{\sol}}
\def\usol{\u_{\sol}}
\def\wsol{\w_{\sol}}
\def\ssol{\sigma_{\sol}}
\def\qsol{\q_{\sol}}
\def\qBsol{q_{B,\sol}}
\def\phisol{\phi_{\sol}}

\DeclareMathOperator\card{card}
\DeclareMathOperator\Img{Im}
\DeclareMathOperator\sign{sign}
\DeclareMathOperator\sech{sech}

\title{
A class of boundary conditions for time-discrete Green--Naghdi equations with bathymetry
}

\begin{document}

\pagestyle{myheadings}
\markboth{\hfill\sc Sebastian Noelle, Martin Parisot, Tabea Tscherpel\hfill}{
Boundary conditions for time-discrete Green--Naghdi  equations
}

\maketitle

\begin{abstract}
This work is devoted to the structure of the time-discrete Green--Naghdi equations 
including bathymetry. 
We use the projection structure of the equations to characterize homogeneous and inhomogeneous boundary conditions for which the semi-discrete equations are well-posed.
This structure allows us to propose efficient and robust numerical treatment of the boundary conditions that ensures entropy stability of the scheme by construction. 
Numerical evidence is provided to illustrate that our approach is suitable for situations of practical interest that are not covered by existing theory. 
\end{abstract}

\blfootnote{\textup{2020} \textit{Mathematics Subject Classification}:
35F60, 65M08, 65M12,76B15, 76M12.}

{\small
\textbf{Keywords}: shallow water flow, Green-Naghdi equations, dispersive equations, boundary conditions, prediction correction scheme, projection method, entropy satisfying scheme
}

\section{Introduction}\label{sec:intro}

The Green--Naghdi model~\cite{GN.1976,S.1953} is a reduced model for free surface flows that is well adapted to the propagation of waves, especially in coastal areas~\cite{WKGS.1995}.
It can be derived from the incompressible free-surface Euler equations, also referred to as water waves model, either by assuming an irrotational flow~\cite{La.2013} or by vertical averaging~\cite{FPPS.2018}. 
Since the Green--Naghdi model is nonlinear and dispersive, the analysis of non-trivial boundary conditions is rather challenging. 
Only few contributions on boundary conditions for the Green--Naghdi equations are available, while the articles~\cite{A.2012,LW.2020} propose some analysis in a similar context.
The fact that in many cases boundary conditions for numerical schemes are tailored to reproduce a specific phenomenon and justified only afterwards shows that we are still far away from a full understanding of boundary conditions for the Green--Naghdi and the water waves model. 
It is the purpose of this work to shed some more light on this topic. 
More specifically, we propose a class of boundary conditions for the time-discrete Green--Naghdi model for which the resulting scheme is entropy-satisfying by construction. 
\medskip

For hyperbolic models, such as the shallow water equations, there is a number of suitable boundary conditions one may pose, depending on the number of characteristics entering the domain.
In this manner standard boundary conditions such as periodic, transparent, symmetric or fixing some of the unknowns have been analyzed in the literature~\cite{HPT.2015,HT.2014b,HT.2014a,HT.2015,PankratzNatvigGjevikNoelle2007,PT.2011,PT.2013}. 
However, both the Green--Naghdi equations and the water waves model are not hyperbolic and for dispersive models like them there is no equivalent to characteristics. 
Usually the analysis is performed on the whole spatial domain, or with periodic or symmetric (wall) boundary conditions, see~\cite{I.2011, L.2005, La.2013, Lannes08, L.2006}.
The strategy for linear dispersive equations proposed in~\cite{A.2012} shares similarities with the approach based on characteristics for hyperbolic problems.
However, it leads to expensive computations that are difficult to perform for the Green--Naghdi model, both for the linearized model and the non-linear one. 

For practical numerical applications on the Green--Naghdi equations mostly periodic or symmetric boundary conditions have been investigated in depth~\cite{SainteMarie16, BCLMT.2011, CLM.2011, Gavrilyuk17, KR.2018}.  
In applications in oceanography a transparent boundary condition used for outgoing waves is indispensable. 
Usually it is replaced by an absorbing boundary layer by adding a source term to dissipate the energy of the wave, see~\cite{KDS.2014}. 
To the best of our knowledge only few contributions go beyond this. 
In~\cite{KN.2020} the authors propose a fine numerical analysis of transparent boundary conditions based on the Dirichlet-to-Neumann map.
Unfortunately this strategy is non-local in time, which makes it quite complex in practice.
A range of recent schemes for the Green--Naghdi model~\cite{SainteMarie16,BCLMT.2011,Gavrilyuk17,Parisot19,Popinet20} apply a prediction-correction strategy, which is well-known for the Euler and Navier--Stokes equations and dates back to~\cite{C.1968, T.1969}, see~\cite{Guermond06} for a review. 
In~\cite{ABGMPS.2020}, a set of boundary conditions is used that mimics the homogeneous boundary conditions of the Euler model based on duality of the differential operators involved. 
In this work we aim to go one step further in this direction by preserving the duality structure at the discrete level. 
This ensures a discrete projection property and allows to treat also inhomogeneous boundary conditions.

Our strategy ensures that the entropy of the whole scheme consisting of the prediction step and the correction step
is non-increasing. 
However, it does not prevent parasitic oscillations caused by boundary conditions not consistent with the Green--Naghdi equations. 
While consistence with well-posed boundary conditions remains an open problem, stability of the numerical scheme is the property we address here. 
Our approach provides a tool to numerically investigate  candidates for (well-posed) boundary conditions of the whole system of equations, to be confirmed by means of analysis. 
Apart from this, it gives rise to robust numerical methods to perform simulations for applications. 

To highlight the benefits of our strategy we present a simple full discretization. 
Here we focus on the discrete projection property rather than on robust computation of the pressure functions. 
The latter would require a discrete inf-sup condition to be satisfied independently of the time step size. 
Such a property is linked to the theory of stationary solutions and thus is deferred to future work. 
\medskip

Let us briefly present the structure of this article. 
In~\secref{sec:splitting} we introduce the Green--Naghdi equations. 
A time-discretization naturally leads to a splitting into an advection step including the shallow water equations, and a correction step. 
In~\secref{sec:proj-dt} we investigate the correction step for the time-discrete and space-continuous case. 
We formulate the correction step as projection for the whole space domain in~\secref{sec:proj-dt-unbd}. 
For a suitable choice of boundary conditions this property is preserved for bounded domains, cf.~\secref{sec:proj-dt-bd}.  
In~\secref{sec:fully-discr-bd} we investigate the fully discrete correction step.
We present a general strategy to construct a scheme with a discrete projection property for the whole space domain in~\secref{NumWholeSpace}. 
To demonstrate the benefits of this strategy we apply it for a simple collocated discretization. 
In~\secref{NumBounded} a condition on discrete boundary conditions is established that ensures that the scheme is still a projection for bounded domains. 
In~\secref{sec:full-num}, we propose a range of boundary conditions for the fully discrete scheme for the full Green--Naghdi equations that satisfy the previously established condition. 
Numerical evidence is presented to demonstrate the approach in some 1D situations, for which previously no strategy was available. 

\section{Prediction-correction splitting}\label{sec:splitting}

In this section we first present the system of Green--Naghdi equations in 1D and 2D and introduce the splitting based on a time discretization. 

\subsection{The Serre/Green--Naghdi model}\label{sec:GN-model}  
\begin{figure}\begin{center}
\begin{tikzpicture}
\begin{scope}[xscale=1,yscale=.3,xshift=-14cm]
	\newcommand\Bottom[1]{(.5-.4*cos(50*#1))+.2*sin(100*#1))}
	\newcommand\etaM[1]{(5+.1*(1+sin(200*#1)))}
	\newcommand\zetaM[2]{{(\Bottom{#2}+(#1)/\L*(\etaM{#2}-(\Bottom{#2})))}}
	\newcommand\qz[2]{((((#2-\etaM{#1}))/((\Bottom{#1}-\etaM{#1}))*((\qb+(((#2-\Bottom{#1}))*\qbb)))))}
\draw[->](-3.1,-.2)--(-1,-.2) node[below] {$x$};
\draw[->](-3.1,-.2)--(-3.1,{5/3-.2}) node[left] {$z$};
	\draw[thick,smooth,samples=100,domain=-3:6] plot(\x,{\etaM{\x}}) 
	%node[right]{\footnotesize$\eta$}
	;
	\fill[fill=black!30,smooth,samples=100,domain=-3:6] (-3,-.1)--plot(\x,{\Bottom{\x}})--(6,-.1);
	\draw[thick,smooth,samples=100,domain=-3:6] plot(\x,{\Bottom{\x}}) node[right]{\footnotesize$B$};
	\pgfmathsetmacro{\x}{3}\pgfmathsetmacro{\yyy}{\x-1.5}\pgfmathsetmacro{\y}{\etaM{\x}}\pgfmathsetmacro{\yy}{\Bottom{\x}}
	\pgfmathsetmacro{\qb}{1}\pgfmathsetmacro{\qbb}{.4}
	\draw[thick,<->] (\yyy,{\etaM{\yyy}})--(\yyy,{\Bottom{\yyy}}) node[pos=.6,right]{\footnotesize$h$};
	\draw[thick,->] (\x,{(\y+3*\yy)/4})--(\x+1.5,{(\y+3*\yy)/4}) node[pos=.75,above]{\footnotesize$\u$};
	\draw[thick,->] (\x,{(\y+3*\yy)/4})--(\x,{(\y+3*\yy)/4+2}) node[pos=.75,left]{\footnotesize$\w$};
	\draw[thick,>-<] (\x+.25,{(\y+3*\yy)/4+1.5})--(\x+.25,{(\y+3*\yy)/4+2.5}) node[midway,right]{\footnotesize$\sigma$};
	\draw[thick,dashed,domain=\etaM{\yyy}:\Bottom{\yyy}] plot({\yyy-\qz{\yyy}{\x})},\x) node[above right]{\footnotesize$q_B$} (.2,{.5*(\etaM{\yyy}+\Bottom{\yyy})})node{\footnotesize$\q$};
\end{scope}
\end{tikzpicture}
\caption{\label{fig:GN-unknown} 
 Illustration of unknowns in the Green--Naghdi model~\eqref{eq:GN} reproduced from~\cite{Parisot19}}
\end{center}\end{figure}

With temporal and spatial variables $(t,x) \in \R_+ \times\Omega$, where the space domain $\Omega$ is a subset of $\R^d$ in $d \in \left\{1,2\right\}$  dimensions, the system of Green--Naghdi equations~\cite{FPPS.2018} can be formulated as 
\begin{subequations}\label{eq:GN}
\begin{alignat}{3}\label{eq:GN-h}
\partial_th
&+
\diver \left(h\u\right)
&&=0, \\ \label{eq:GN-hu}
\partial_t \left(h\u\right)
&+
\diver \left(h\u\otimes\u+\frac g2h^2\I \right)\;
&&= - \nabla\left(h\q\right)
-(gh+q_B)\nabla B,
\\ \label{eq:GN-hw}
\partial_t (h\w )
&+
\diver \left(h\w\ \u \right)
&&=
q_B,
\\ \label{eq:GN-hs}
\partial_t (h\sigma)
&+
\diver \left(h\sigma\ \u\right)
&&=
\sqrt{3} \left(2\q-q_B\right). 
\end{alignat}
The given quantities are the gravitational constant $g$, the bathymetry $B(x) \in \mathbb{R}$ representing the bottom, and $\I$ denotes the identity matrix in $\mathbb{R}^{d \times d}$. 
The unknowns are the following: $h(t,x)\in\R_+$ is the water depth, $\u(t,x)\in\R^d$ is the vertically-averaged horizontal velocity, $\w(t,x) \in \R$ is the vertically-averaged vertical velocity, $\sigma(t,x)\in \R$ is the oriented vertical standard deviation of the vertical velocity.
The vertically-averaged hydrodynamic pressure $\q(t,x)\in \R$ and the hydrodynamic pressure at the bottom $q_B(t,x) \in \R$ are also unknowns of the Green--Naghdi model.
They can be interpreted as Lagrange multipliers to ensure that the following constraints are satisfied
\begin{align}\label{eq:GNconstraints}
\w = 
\u\cdot \nabla B-\frac{h}{2} \diver \u
\qquad \text{ and } \qquad 
\sigma =-\frac{h}{2\sqrt{3}}\diver \u. 
\end{align}
\end{subequations}
In \figref{fig:GN-unknown} an illustration of the unknown functions is presented.
Additionally, initial conditions $\left(h,\u\right)(0,x)=\left(h^0,\u^0\right)(x)$ have to be prescribed, while the initial vertical velocity $\left(\w,\sigma\right)(0,x)\eqqcolon \left(\w^0,\sigma^0\right)(x)$ are given as function of $\left(h^0,\u^0\right)(x)$ by the constraints~\eqref{eq:GNconstraints}.

Note that there are several formulations of the Green--Naghdi equations that are equivalent for sufficiently smooth solutions, cf.~\cite{CS.93, La.2013, Peregrine67, Z.1972}. 
The formulation in~\cite{Cienfuegos06} can be recovered from~\eqref{eq:GN} by inserting the constraints \eqref{eq:GNconstraints} into \eqref{eq:GN-hw} and \eqref{eq:GN-hs} and using the mass conservation \eqref{eq:GN-h} to find the hydrodynamic pressure functions expressed as functions of $h$, $\u$ and its derivatives.
We obtain $\q=h\left(\frac{\P}3+\frac{\Q}2\right)$ and $q_B=h\left(\frac{\P}2+\Q\right)$ with
\begin{align*}
\P=
-h\left(\partial_t\left(\diver \u\right)+\u\cdot\nabla\left( \diver \u\right)-\left(\diver \u\right)^2\right)
\quad\textrm{and}\quad
\Q=\nabla B\cdot\left(\partial_t\u+\u\cdot \nabla\u\right) + \nabla\left(\nabla B\right):\u\otimes\u.
\end{align*}
The formulation~\eqref{eq:GN} is a slightly weaker version in the sense that it contains more variables but only first order differential operators. 
It has the advantage that in the time-discrete form it exhibits a linear projection structure similarly as the incompressible Euler equations.

For the whole spatial domain $\Omega = \R^d$ some analysis on the Green--Naghdi model is performed in~\cite{La.2013}.
In~\cite{I.2011, Lannes08, L.2006} it is shown that the Green--Naghdi model is well-posed in a finite time cylinder $(0,T)\times \R^d$ such that at the maximal time $T$ either the water depth degenerates or the velocity tends to infinity. 
In addition, sufficiently regular solutions of~\eqref{eq:GN} satisfy the following energy conservation identity
\begin{align}\label{eq:GNenergy}
\partial_t\E\left(W\right)
+\nabla\cdot\left(\Gsw\left(W\right)+\Ggn\left(W,\q,q_B\right)\right)
=0,
\end{align}
where the state vector reads $W\coloneqq\left(h,h\u^\top,h\w,h\sigma,B\right)^\top$, the mechanical energy is 
\begin{align*}
\E\left(W\right)\coloneqq gh\left(B+\frac h2\right)+h\frac{\abs{u}^2+\w^2+\sigma^2}2,
\end{align*}
 and the fluxes are given as
\begin{align*}
\Gsw\left(W\right)\coloneqq \left(g\left(h+B\right)+\frac{\abs{\u}^2+\w^2+\sigma^2}2\right)h\u
\qquad\textrm{and}\qquad
\Ggn\left(W,\q,q_B\right) \coloneqq h\q\ \u.
\end{align*}
Here $\l|\bullet\r|$ denotes the Euclidean norm. 
Note that the bathymetry function $B$ in the state vector $W$ is given. 
Since the energy conservation plays a central role in this work we recall the proof in 1D for the reader's convenience.
Full details of the proof can be found in the literature, see for instance~\cite{La.2013}. For the proof of the precise form  \eqref{eq:GNenergy} see~\cite{FPPS.2018}.

\begin{proof}[Proof of energy conservation \eqref{eq:GNenergy} in 1D]
For simplicity of notation we present the proof in the case of one spatial dimension $d = 1$. 
Note that the case $d = 2$ can be treated analogously. 
Let us first introduce the vectorial form of \eqref{eq:GN} in 1D. 
It reads
\begin{equation}\label{eq:GN_vect}
%\partial_tW+\nabla\cdot F\left(W\right)=S\left(W\right)+Q\left(W,\q,q_B\right),
\partial_t W+A\left(W\right)\partial_x W=
Q\left(W,\q,q_B\right),
\end{equation}
with the matrix of the hyperbolic part and the dispersive term respectively
\begin{equation*}
A\left(W\right)\coloneqq\begin{pmatrix}
0&1&0&0&0
\\
gh -\u^2 &2 \u &0&0&gh
\\
-\u\,\w& \w&\u&0&0
\\
-\u\,\sigma& \sigma&0&\u&0
\\
0&0&0&0&0
\end{pmatrix}
\qquad\textrm{and}\qquad
Q\left(W,\q,q_B\right)\coloneqq 
\begin{pmatrix}
0
\\\ds
-\partial_x \left(h\q\right)-q_B\partial_x B
\\\ds
q_B
\\\ds
\sqrt{3}\left(2\q-q_B\right)
\\\ds
0
\end{pmatrix}.
\end{equation*}
Then \eqref{eq:GN_vect} supplemented by the constraints ~\eqref{eq:GNconstraints} is equivalent to \eqref{eq:GN} in 1D.  

Applying the definitions yields the compatibility identity
\begin{equation}\label{FluxCompatibility}
\nabla_W\E\ A=\nabla_W\Gsw,
\end{equation}
where $\nabla_W\bullet$ denotes the gradient with respect to the state variable $W$. %
Taking the scalar product with $\nabla_W\E$ in \eqref{eq:GN_vect} shows that
\begin{align*}
\partial_t\E+\partial_x\Gsw 
&= \nabla_W\E\cdot\left(\partial_tW+A\partial_x W\right) 
= \nabla_W\E\cdot Q\\
&= -\partial_x\Ggn
+\q\left(h\partial_x\u+2\sqrt{3}\sigma\right)
+q_B\left(\u\partial_x B+\w-\sqrt{3}\sigma\right).
%+\q h\nabla\cdot\u
%-q_B\u\cdot\nabla B
%+q_B\w
%+\sqrt{3}\left(2\q-q_B\right)\sigma
\end{align*}
Then, we conclude using the constraints \eqref{eq:GNconstraints}. 
\end{proof}

For hyperbolic equations compatibility relations of the form  \eqref{FluxCompatibility} are well-known, cf.~\cite[\S1.4]{Bouchut04}. 
In that context $(\E,\Gsw)$ is referred to as entropy / entropy flux pair. 
The existence of such a pair is in general required for uniqueness of weak solutions, even though it is not always sufficient. 
Numerical schemes that preserve the entropy stability are referred to as \emph{entropy-satisfying} and are very robust, see \cite{Bouchut04}. 

Similarly as in the case of the shallow water model, for the Green--Naghdi equations the mechanical energy $\E$ satisfies the properties of a mathematical entropy, i.e., it is a convex function in the first four arguments of $W$ (without the given component $B$) and it satisfies a conservation law \eqref{eq:GNenergy}.
Here we aim for numerical schemes on bounded domains that are entropy-satisfying with the mechanical energy acting as a mathematical entropy.
The proof of the energy conservation \eqref{eq:GNenergy} is based on the existence of a flux $\Ggn$ such that $\nabla_W\E\cdot Q=-\nabla\cdot \Ggn$. 
At first sight this property seems to be non-trivial, but it is strongly linked to the duality between the dispersive terms and the constraints \eqref{eq:GNconstraints} as we shall see in \secref{sec:proj-dt}.
This structure corresponds to the one of incompressible fluid equations, such as the incompressible Euler equations.

\subsection{The time-discrete problem}\label{sec:time-discrete}  
In the following we consider a time-discrete version of the Green--Naghdi equations without discretization in the spatial domain. 
For the time stepping we set $t^0 = 0$ and $t^{n+1} = t^n + \dtn$ with time step $\dtn>0$.
The choice of $\dtn$ is further discussed in~\secref{sec:full-num}.

The time-discrete problem can be decomposed into two steps per time step.
This results in a prediction-correction type approach similar to the one for the incompressible Euler equations, cf.~\cite{Guermond06}.
Each time step can be formulated as composition of 
an explicit shallow water and advection step,
and an implicit correction step ensuring the constraints by means of the pressure functions $\q$ and $q_B$.
Such a splitting into an advection step and a correction step is used in a number of contributions in particular for numerical computations, see~\cite{SainteMarie16,BCLMT.2011,Gavrilyuk17,Parisot19,Popinet20}.

\begin{enumerate}[label=(\Roman*),leftmargin=0cm]\setlength{\itemindent}{1cm}
\item {\style Advection step:}
For given $\left(h^n,\u^n,\w^n,\sigma^n\right)$, let $\left(h^{n*},\u^{n*},\w^{n*},\sigma^{n*}\right)$ be such that
%For given $W^n=\left(h^n,h^n\u^n,h^n\w^n,h^n\sigma^n,B\right)$,\\let $W^{n*}=\left(h^{n*},h^{n*}\u^{n*},h^{n*}\w^{n*},h^{n*}\sigma^{n*},B\right)$ be such that
%
\begin{subequations}\label{eq:GN-dt-adv}
\begin{alignat}{3}
h^{n*} & = h^n 
&&- \dtn \diver
 \(h^n\u^n\)
 %F_{h}^n
, \label{eq:GN-dt-adv-h}
&&\\
h^{n*} \u^{n*}
& = h^n  \u^n 
 && - \dtn  \diver 
\(h^n\u^n\otimes\u^n+\frac g2\l|h^n\r|^2\I\)
%F_{h \u}^n
%&& + \dtn S^n, \label{eq:GN-dt-adv-hu} \\
&& - \dtn gh^n\nabla B, \label{eq:GN-dt-adv-hu} \\
h^{n*}\w^{n*}
&= 
h^n \w^n 
&& - \dtn \diver
\(h^n\w^n\ \u^n\)
%F^n_{h\w}
, \label{eq:GN-dt-adv-w}\\
h^{n*} \sigma^{n*}
&= 
h^n \sigma^n 
&& - \dtn \diver
\(h^n\sigma^n\ \u^n\)
%F^n_{h\sigma}
. \label{eq:GN-dt-adv-s}
\end{alignat}
\end{subequations}
Hence, the functions $(h^{n*}, h^{n*} \u^{n*})$ are given as solutions to the explicit time-discrete shallow water system with bathymetry source term consisting of \eqref{eq:GN-dt-adv-h} and \eqref{eq:GN-dt-adv-hu}.
Then $(h^{n*} \w^{n*},  h^{n*} \sigma^{n*})$ are solutions to the system of advection equations composed by \eqref{eq:GN-dt-adv-w} and \eqref{eq:GN-dt-adv-s}, which is of the same form as the one describing the transport of a passive pollutant. 

Denoting $W^n \coloneqq \left(h^n,h^n\u^n,h^n\w^n,h^n\sigma^n,B\right)$ and $W^{n*}$ analogously, 
similar computations and smoothness assumptions as the ones in the proof of \eqref{eq:GNenergy} lead to the following balance law
\begin{equation}\label{Entropy_adv}
\E\left(W^{n*}\right)-\E\left(W^n\right)+\delta_t^n\nabla\cdot\Gsw\left(W^n\right)=\delta_t^n\Rest_{\mathrm{adv}}^n.
\end{equation}
Here $\Rest_{\mathrm{adv}}^n$ denotes the time discretization error that vanishes as $\delta_t^n$ tends to zero. 

The hyperbolic system \eqref{eq:GN-dt-adv} can be numerically approximated by standard methods, cf.~\cite{Bouchut04}.
Some of them, for example the HLL solver with an improved version of the hydrostatic reconstruction \cite{Berthon19}, are entropy-satisfying, i.e., the approximate solutions satisfy a fully discrete version of \eqref{Entropy_adv} with $\Rest_{\mathrm{adv}}^n\le0$.

\item {\style Correction step:}
Then let $h^{n+1} \coloneqq h^{n*}$ and let the functions $\left(\u^{n+1},\w^{n+1},\sigma^{n+1}\right)$ and $\left(\q^{n+1}, q_B^{n+1}\right)$ be determined by 
\begin{subequations}\label{eq:GN-dt-proj-pre}
\begin{alignat}{3}
h^{n+1} \u^{n+1} 
& = h^{n+1} \u^{n*}
&& - \dtn \( \nabla \(h^{n+1} \q^{n+1}\) + q_B^{n+1} \nabla B \), \label{eq:GN-dt-proj-u-pre} \\
h^{n+1} \w^{n+1} 
& = h^{n+1} \w^{n*}
&& + \dtn q_B^{n+1}, \label{eq:GN-dt-proj-w-pre}\\
h^{n+1} \sigma^{n+1} 
& = h^{n+1} \sigma^{n*}
&& +\dtn \sqrt{3}\(2\q^{n+1} - q_B^{n+1}\),\label{eq:GN-dt-proj-s-pre}
\end{alignat} 
subject to the constraints
\begin{align}\label{eq:GNconstraints-dt}
\w^{n+1} = \u^{n+1}\cdot \nabla B -\frac{h^{n+1}}{2} \diver \u^{n+1} \qquad \text{ and } \qquad 
\sigma^{n+1} =-\frac{h^{n+1}}{2\sqrt{3}}\diver \u^{n+1}. 
\end{align}
\end{subequations}

Similar computations and smoothness assumptions as the ones in the proof of \eqref{eq:GNenergy} lead to the following balance law 
\begin{equation}\label{Entropy_proj}
%\mat{c}{\ds
%\E\left(h^{n+1},h^{n+1}\u^{n+1},h^{n+1}\w^{n+1},h^{n+1}\sigma^{n+1}\right)-\E\left(h^{n*},h^{n*}\u^{n*},h^{n*}\w^{n*},h^{n*}\sigma^{n*}\right)
%~\qquad\\\qquad~\ds
%+\delta_t^n\nabla\cdot\left(h^{n+1}\q^{n+1}\u^{n+1}\right)
%=\delta_t^n\Rest_{\mathrm{cor}}^{n+1},
%}
\E\left(W^{n+1}\right)-\E\left(W^{n*}\right)
+\delta_t^n\nabla\cdot \Ggn\left(W^{n+1},\q^{n+1},q_B^{n+1}\right)
=\delta_t^n\Rest_{\mathrm{cor}}^{n+1},
\end{equation}
and the time discretization error $\Rest_{\mathrm{cor}}^{n+1}\le0$ vanishes as $\delta_t^n$ tends to zero.   

In the following we aim to preserve this energy balance both for weaker notions of solutions in integrated form as well as for the fully discrete case. 
In conjunction with the corresponding estimate for the hyperbolic step this will ensure entropy stability of the full scheme. 
\end{enumerate}

\begin{rmk}\label{rmk:h-fixed}
The water depth $h^{n+1}$ is fully determined by the advection step. 
In the correction step it merely serves as a parameter, and hence the constraints are linear in the unknown functions $\left(\u^{n+1}, \w^{n+1}, \sigma^{n+1}\right)$. 
This is a major benefit of the splitting strategy.
%
%In addition, the potential energy $gh\(B+\frac h2\)$ is hence exactly preserved by the correction step.
%The balance law \eqref{Entropy_proj} 
%
\end{rmk}
 
As already observed in~\cite{SainteMarie16, Parisot19, Popinet20}, the system~\eqref{eq:GN-dt-proj-pre} is related to a projection of the solutions of the advection step~\eqref{eq:GN-dt-adv} to the set of admissible solutions satisfying the constraints~\eqref{eq:GNconstraints-dt}.
In fact, the pressure functions $\left(\q^{n+1},q_B^{n+1}\right)$ act as Lagrange multipliers enforcing the constraints. 
In the following section we investigate the correction step and its projection structure in more detail.

\section{Correction step of the time-discrete problem}
\label{sec:proj-dt}

In this section we analyze a single correction step. 
We shall find that indeed it has projection structure for the whole space domain $\Omega = \R^d$, see~\secref{sec:proj-dt-unbd}.
On bounded sets $\Omega \subset \R^d$ we characterize boundary conditions for which a projection structure is available, see~\secref{sec:proj-dt-bd}. 

Let us recall the system~\eqref{eq:GN-dt-proj-pre} and for simplicity we avoid the time step indices.  
Within the correction step the water depth function is fixed and can be seen as parameter $h(x)$ similarly as the bathymetry $B(x)$, see \rmkref{rmk:h-fixed}.
Also the time step $\dt>0$ is fixed already by the preceding advection step.
We want to find functions $U=\left(\u,\w,\sigma\right)^\top$ and $\left(\q,q_B\right)$ such that for the given function $U^*=\left(\u^*,\w^*,\sigma^*\right)^\top$ we have that
\begin{subequations}\label{eq:GN-dt-proj}
\begin{equation}\label{eq:GN-dt-proj-U}
U=U^*-\dt\Psi_h\left(\q,q_B\right)
\qquad\text{ with }\qquad
\Psi_h\left(\q,q_B\right)
\coloneqq
\frac{1}{h}
\begin{pmatrix}
\nabla\left(h\q\right) +q_B \nabla B
\\
- q_B
\\
- \sqrt{3}(2 \q - q_B)
\end{pmatrix},
\end{equation}
subject to the constraints 
\begin{align}\label{eq:GNconstraints-dt-proj}
\w = \u\cdot \nabla B -\frac{h}{2} \diver \u, \quad \text{ and } \quad 
\sigma =-\frac{h}{2\sqrt{3}}\diver \u.
\end{align}
\end{subequations}

To investigate the projection structure, let us introduce the formal framework including some assumptions on the fixed functions. 
Let the spaces $L^p(\Omega)$ and $W^{1,p}(\Omega)$ be the standard Lebesgue and Sobolev spaces, for $p \in [1,\infty]$ and an open set $\Omega \subset \R^d$.  
For $p=\infty$ those are the spaces of essentially bounded functions, and of Lipschitz functions, respectively. 

\begin{hyp}[Parameters]\label{hyp:B-h-dt}
Assume that:
\begin{enumerate}[label=\roman*),ref=\roman*)]
\item \label{hyprefH}\global\def\hyprefH{\hypref{hyp:B-h-dt}.\ref{hyprefH}\xspace} $h \geq 0$,
$h \in L^{\infty}(\Omega)$ and~~$1/h\in L^{\infty}(\Omega)$;
\item \label{hyprefB}\global\def\hyprefB{\hypref{hyp:B-h-dt}.\ref{hyprefB}\xspace}
$B \in W^{1,\infty}(\Omega)$;
%\\which means that $B$ is a Lipschitz continuous space function;
%
\item \label{hyprefD}\global\def\hyprefD{\hypref{hyp:B-h-dt}.\ref{hyprefD}\xspace}
$\dt>0$.
\end{enumerate}
\end{hyp}

 \hyprefH means that $h$ is a positive function on $\Omega$, it is essentially bounded and bounded away from zero.  
This is in general not the case for solutions of the advection step~\eqref{eq:GN-dt-adv}, since so-called dry areas may occur. 
Those are subsets of $\Omega$ on which $h= 0$ and they are of great importance in practice.  
In their presence the correction step would have to be restricted to a subset compactly contained in the support of $h$. 
To reduce the level of technicality in this section we focus on non-degenerating $h$ as in \hyprefH.
We shall see in~\secref{sec:fully-discr-bd} that for the fully discrete problem it is not an issue to deal with dry areas. 

\subsection{\texorpdfstring{Whole space domain $\Omega=\R^d$}{Whole space domain}}

\label{sec:proj-dt-unbd}

In this section we consider the system of equations~\eqref{eq:GN-dt-proj} on the whole space $\Omega=\R^d$ to prepare and motivate the subsequent approach for bounded domains, see~\secref{sec:proj-dt-bd}.  

Thanks to \hyprefH the water depth $h$ is a weight function on $\Omega$ as defined in measure theory. 
Hence, we may work with the following weighted scalar product
\begin{equation*}
\ll f,g\rr_{h} \coloneqq \int_{\Omega} h f g \dx,
\end{equation*}
the induced norm $\norm{\cdot}_{h}$, and $\Lh$ the space of measurable function on $\Omega$ with bounded $\norm{\cdot}_{h}$-norm. 
The weighted norm $\norm{\cdot}_{h}$ appears naturally in the energy estimates~\eqref{eq:GNenergy} and also in the estimate~\eqref{Entropy_adv} for the advection equations. 
Hence, it is natural to assume that  
\begin{equation*}
U^* = \left(\u^*,\w^*,\sigma^*\right)^\top \in \Lh^d \times \Lh \times \Lh\eqqcolon \Lh^{d+2}.
\end{equation*}

The constraints in~\eqref{eq:GNconstraints-dt-proj} define a space of admissible functions, which any solution $U$ of~\eqref{eq:GN-dt-proj} is contained in.  
\begin{defi}[Space of admissible functions]
\label{def:Ah}
For any open set $\Omega\subset\R^d$, we define the \textit{space of admissible functions} on $\Omega$ by 
\begin{equation*}
\Ahb \coloneqq \left\{
U=(u,w,\sigma)^\top\in \Lh^{d+2} \colon 
\quad w = u \cdot \nabla B - \frac{h}{2} \diver u,
\quad\sigma = - \frac{h}{2 \sqrt{3}} \diver u \;
 \right\}.
\end{equation*}
\end{defi}
Furthermore, for any closed linear subspace~~$\Eset \subset\Lh^{d+2}$
we denote by $\Pi_h[\Eset]$  the linear $\ll \cdot, \cdot \rr_{h}$-orthogonal projection mapping $\Lh^{d+2}$ to $\Eset$, defined by
\begin{equation}\label{def:Pi-2}
\ll \Pi_h[\Eset](U),V \rr_{h} = \ll U,V \rr_{h} 
\qquad \text{ for all }  \; V \in\Eset. 
\end{equation}
The space $\Lh^{d+2}$ is a Hilbert space and by  \hypref{hyp:B-h-dt} one can show that $\Ahb$ is a closed linear subspace of $\Lh^{d+2}$. 
Consequently, the projection $\Pih$ is well-defined. 
By $\Ahb^\perp$ we denote the $\ll\cdot,\cdot\rr_h$-orthogonal complement of $\Ahb$ in $\Lh^{d+2}$. 

Now we are in the position to present the projection structure of~\eqref{eq:GN-dt-proj}. 
\begin{lem}[Projection property on $\Omega=\R^d$]\label{lem:proj-Rn}
Let \hypref{hyp:B-h-dt} be satisfied on $\Omega = \R^d$.  
Let a function $U^*\in \LRh^{d+2}$ be given. 
Assume that $U \in \LRh^{d+2}$ and functions $\left(\q,q_B\right) \in \LRh$ such that $\nabla\left(h \q\right) \in L^2(\R^d)^{d}$ form a solution to the correction step~\eqref{eq:GN-dt-proj}.  
Then, the solution is determined by the projection to the space of admissible functions in the sense that
\begin{equation*}
U = \Pih\left(U^*\right)
\qquad\textrm{and}\qquad
%%\Img(\Psi_h)=\Ahb^\perp.
\Psi_h\left(\q,q_B\right) \in \Ahb^\perp.
\end{equation*}
\end{lem}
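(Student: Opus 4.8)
The plan is to establish two things: first, that the correction term $\Psi_h(\q,q_B)$ is $\ll\cdot,\cdot\rr_h$-orthogonal to the space $\Ahb$ of admissible functions, and second, that this orthogonality together with the decomposition $U = U^* - \dt\,\Psi_h(\q,q_B)$ and the membership $U \in \Ahb$ forces $U = \Pih(U^*)$ by the defining property of the orthogonal projection. The second part is immediate once the first is in hand: since $U \in \Ahb$ by the constraints \eqref{eq:GNconstraints-dt-proj}, and $U^* - U = \dt\,\Psi_h(\q,q_B) \in \Ahb^\perp$, we have for every $V \in \Ahb$ that $\ll U, V\rr_h = \ll U^*, V\rr_h - \dt\,\ll\Psi_h(\q,q_B), V\rr_h = \ll U^*, V\rr_h$, which is exactly \eqref{def:Pi-2} characterizing $U = \Pih(U^*)$.

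The heart of the proof is therefore the orthogonality claim $\Psi_h(\q,q_B) \in \Ahb^\perp$, which is where the duality structure between the dispersive terms and the constraints enters. I would take an arbitrary $V = (v, r, s)^\top \in \Ahb$, so that $r = v\cdot\nabla B - \tfrac h2\diver v$ and $s = -\tfrac{h}{2\sqrt3}\diver v$, and compute
\begin{align*}
\ll \Psi_h(\q,q_B), V\rr_h
&= \int_{\R^d} h\left(\frac1h\big(\nabla(h\q) + q_B\nabla B\big)\cdot v - \frac{q_B}{h}\, r - \frac{\sqrt3(2\q - q_B)}{h}\, s\right)\dx \\
&= \int_{\R^d} \Big(\nabla(h\q)\cdot v + q_B\,\nabla B\cdot v - q_B\, r - \sqrt3(2\q - q_B)\, s\Big)\dx.
\end{align*}
Now I would integrate the term $\int \nabla(h\q)\cdot v\dx$ by parts to get $-\int h\q\,\diver v\dx$ (the boundary term vanishing because $\Omega = \R^d$ and the integrability assumptions $\q, v \in \LRh$, $\nabla(h\q) \in L^2$), and then substitute the expressions for $r$ and $s$ from the constraints. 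The resulting integrand should, after collecting the coefficients of $\q$ and of $q_B$ separately, reduce to $\q\big({-h\diver v} - \sqrt3\cdot 2\cdot({-\tfrac{h}{2\sqrt3}\diver v})\big) + q_B\big(\nabla B\cdot v - (v\cdot\nabla B - \tfrac h2\diver v) + \sqrt3\cdot(-\tfrac{h}{2\sqrt3}\diver v)\big)$, and both parentheses vanish identically. This is precisely the algebraic cancellation that mirrors the computation in the proof of the energy conservation \eqref{eq:GNenergy}, where $\nabla_W\E\cdot Q = -\nabla\cdot\Ggn$.

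The main obstacle, and the point requiring genuine care rather than routine algebra, is the justification of the integration by parts: one must check that $\int_{\R^d}\nabla(h\q)\cdot v\dx = -\int_{\R^d} h\q\,\diver v\dx$ holds under the stated regularity, namely $v \in \LRh^d$ with $\w = v\cdot\nabla B - \tfrac h2\diver v \in \LRh$ (which, combined with $B \in W^{1,\infty}$ and the bounds on $h$, controls $\diver v$ in $L^2$), $h\q \in L^2$ with $\nabla(h\q) \in L^2(\R^d)^d$. So $h\q \in H^1(\R^d)$ and $\diver v \in L^2(\R^d)$, and the formula is the standard duality pairing between $H^1(\R^d)$ and $H(\divergence;\R^d)$ on the whole space, with no boundary contribution. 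Once this is secured, the rest is the verification of the pointwise cancellation above, which is purely algebraic.
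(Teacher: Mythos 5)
Your proposal is correct and follows essentially the same route as the paper: both reduce the claim to showing $\ll \Psi_h(\q,q_B), V\rr_h = 0$ for all $V\in\Ahb$, using the constraints to cancel the $\q$ and $q_B$ terms algebraically and the duality of $\nabla$ and $\diver$ on $\R^d$ (the paper phrases this as the vanishing of $\int_{\R^d}\diver(h\q\,V_1)\dx$ via smooth approximation, Gau\ss--Green and decay, which is exactly your integration by parts without boundary term). The regularity bookkeeping you give ($h\q\in H^1(\R^d)$, $\diver V_1\in L^2(\R^d)$ from the constraints and \hypref{hyp:B-h-dt}) matches the paper's justification, so there is nothing to add.
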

\begin{proof}
By the constraints~\eqref{eq:GNconstraints-dt-proj}
we have that $U \in \Ahb$. 
Furthermore, if~\eqref{eq:GN-dt-proj} is satisfied, then we have for all $V =(V_1,V_2,V_3)^\top \in \Ahb$ that 
\begin{equation} \begin{split} \label{eq:proj-Rn}
\ll U - U^*, V \rr_{h} 
&=
- \dt
\int_{\R^d}  \diver \left(h \q  V_1\right)  \dx
= 0. 
\end{split}
\end{equation}
Note that by assumption we have that $\nabla\left(h\q\right) \in L^2(\R^d)^d$ and since $V\in \Ahb$ using \hyprefH we find that $\diver V_1 \in L^2(\R^d)$. 
The fact that the integral vanishes follows by smooth approximation, the Gau\ss{}--Green theorem, and decay properties of integrable functions on $\R^d$. 
This proves that $\Psi_h\left(\q,q_B\right) \in \Ahb^\perp$ and by uniqueness of the decomposition it follows that $U = \Pih\left(U^*\right)$. 
\end{proof}

The projection structure has the following benefits: 
The space-continuous problem is well-posed and using the orthogonality the following energy balance holds
\begin{equation}\label{eq:energy}
\norm{U}_h^2 - \norm{U^*}_h^2 = - \dt^2 \norm{\Psi_h\left(\q,q_B\right)}_h^2.
\end{equation}
Since the water depth is not modified by the correction step, this equality is a space integrated version of the balance law \eqref{Entropy_proj} with $\Rest_{\mathrm{cor}}^{n+1}=-\dt\norm{\Psi_h\left(\q,q_B\right)}_h^2$. 
For numerical computations a space-discrete projection property is particularly useful, since it guarantees numerical stability. 
More precisely, a discrete version of the identity~\eqref{eq:energy} ensures that the scheme for the space discrete correction step is entropy-satisfying.
In addition, the projection property paves the way to efficient higher-order schemes requiring only one implicit correction step, cf.~\cite{Guermond06}. 
This strategy has been applied to the Green--Naghdi model in~\cite{Parisot19} for symmetric boundary conditions and numerical evidence shows that the second order is achieved. 
Last but not least, in the following section we adopt the projection point of view to identify a family of boundary conditions for which well-posedness of the correction step is guaranteed.
\medskip

Functions in $\Ahb$ and $\Ahb^\perp$ enjoy regularity properties in the sense that the hypothesis of \lemref{lem:proj-Rn} are satisfied without extra assumptions.  
In order to show this let us consider the classical function spaces for an open subset $\Omega \subset \R^d$ defined by
\begin{align*}
H^1(\Omega)&\coloneqq\left\{f \in L^2(\Omega) \colon \nabla f \in L^2(\Omega)^d\right\},
\\\ds
\Hdiv &\coloneqq \left\{f \in L^2(\Omega)^d \colon  \diver f \in L^2(\Omega)\right\},
\end{align*}
and recall that by \hyprefH the function spaces $L^2(\Omega)$ and $\Lh$ coincide. 
\begin{prop}\label{prop:reg-Ah}
Let \hypref{hyp:B-h-dt} be satisfied for an open subset $\Omega \subset \R^d$. 
\begin{enumerate}[label=\roman*),ref=\roman*)]
\item \label{itm:reg-Ah-U}
For any $U = \left(\u,\w,\sigma\right)^\top \in \Ahb$ we have that $\u\in \Hdiv$.
\item \label{itm:reg-Ah-q}
For any~~$\Phi \in \Ahb^\perp$ there exists a  unique pair of functions $\left(\q,q_B\right)\in L^2(\Omega)^2$ such that $\Psi_h\left(\q,q_B\right)=\Phi$ with $\Psi_h$ as in~\eqref{eq:GN-dt-proj-U}. In addition, one has that $h\q\in H^1(\Omega)$.
\item \label{itm:reg-Ah-qinv}
Conversely, for any $\left(\q,q_B\right)\in L^2(\Omega)^2$ with $h\q \in H^1(\Omega)$ one has $\Psi_h\left(\q,q_B\right)\in \Ahb^\perp$. 
\end{enumerate}
\end{prop}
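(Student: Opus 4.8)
The plan is to treat the three items essentially by unwinding the definitions of $\Ahb$ and of the orthogonal complement, together with integration by parts against test functions. For item \ref{itm:reg-Ah-U}, given $U = (\u,\w,\sigma)^\top \in \Ahb \subset \Lh^{d+2}$, the constraint $\sigma = -\tfrac{h}{2\sqrt3}\diver \u$ forces $h\,\diver\u \in L^2(\Omega)$ in the distributional sense; using \hyprefH, i.e. that $h$ and $1/h$ are both in $L^\infty(\Omega)$, one multiplies by $1/h$ to conclude $\diver\u \in L^2(\Omega)$, and since $\u \in L^2(\Omega)^d$ already (again by \hyprefH the weighted and unweighted $L^2$ spaces coincide), this gives $\u \in \Hdiv$.

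For item \ref{itm:reg-Ah-q}, let $\Phi = (\Phi_1,\Phi_2,\Phi_3)^\top \in \Ahb^\perp$. I would first extract $(\q,q_B)$ algebraically: the second and third components of the defining relation $\Psi_h(\q,q_B) = \Phi$ read $-q_B/h = \Phi_2$ and $-\sqrt3(2\q - q_B)/h = \Phi_3$, which is a $2\times 2$ invertible linear system (with coefficients bounded above and below thanks to \hyprefH) yielding $q_B = -h\Phi_2 \in L^2(\Omega)$ and $\q = \tfrac12 q_B - \tfrac{h}{2\sqrt3}\Phi_3 \in L^2(\Omega)$, hence uniqueness. It remains to show $h\q \in H^1(\Omega)$, i.e. that $\nabla(h\q) \in L^2(\Omega)^d$, and to check the first component relation $\tfrac1h(\nabla(h\q) + q_B\nabla B) = \Phi_1$. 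Here the orthogonality is used: for $U = (\u,\w,\sigma)^\top \in \Ahb$, expanding $\ll \Phi, U\rr_h = 0$ and substituting the constraints for $\w,\sigma$ gives, after collecting terms,
\begin{equation*}
\int_\Omega \bigl( h\Phi_1 \cdot \u + q_B\,\u\cdot\nabla B - \tfrac{h}{2} q_B \diver\u + \tfrac{h}{2} q_B \diver\u \bigr)\dx = 0
\end{equation*}
(the computation is arranged so the $\w$ and $\sigma$ contributions combine into the pairing of $q_B$ with $\u\cdot\nabla B - \tfrac h2\diver\u$ and with $-\tfrac h2\diver\u$, which after using the algebraic identities for $\Phi_2,\Phi_3$ reduce to exactly $q_B\,\u\cdot\nabla B$; one should double-check the constants $\tfrac13$, $\tfrac12$ appearing in $\Psi_h$ here). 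Taking $\u \in C_c^\infty(\Omega)^d$ (which lies in $\Ahb$ once one appends the prescribed $\w,\sigma$), this says $h\Phi_1 - q_B\nabla B$ equals $\nabla(h\q)$ in the sense of distributions, so $\nabla(h\q) \in L^2(\Omega)^d$ and $h\q \in H^1(\Omega)$, and the first component relation holds as well.

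Item \ref{itm:reg-Ah-qinv} is the converse: given $(\q,q_B) \in L^2(\Omega)^2$ with $h\q \in H^1(\Omega)$, the function $\Psi_h(\q,q_B)$ is a well-defined element of $\Lh^{d+2}$ by \hyprefH and \hyprefB, and one verifies $\ll \Psi_h(\q,q_B), U\rr_h = 0$ for all $U \in \Ahb$ by the same integration-by-parts computation run in reverse — substituting the constraints for $\w,\sigma$, integrating by parts to move the gradient off $h\q$ onto $\u$ (legitimate since $h\q \in H^1$ and $\diver\u \in L^2$ by item \ref{itm:reg-Ah-U}, with no boundary term because $U$ ranges over all of $\Ahb$, in particular over the dense subset with compactly supported $\u$), and checking the terms cancel. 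The main obstacle is purely bookkeeping: getting the algebra of the coefficients $\tfrac{1}{h}$, $\sqrt3$, $\tfrac13$, $\tfrac12$ in $\Psi_h$ and in the constraints to match up so that the $\w$- and $\sigma$-pairings really do collapse to the single $q_B\,\u\cdot\nabla B$ term; the functional-analytic content (density of smooth compactly supported fields, absence of boundary terms, equivalence of $L^2$ and $\Lh$) is routine once \hyprefH and \hyprefB are invoked.
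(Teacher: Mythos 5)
Your plan follows the paper's proof essentially step for step: item~\ref{itm:reg-Ah-U} from the $\sigma$-constraint together with \hyprefH; item~\ref{itm:reg-Ah-q} by reading $(\q,q_B)$ off the second and third components (your formulas $q_B=-h\Phi_2$ and $\q=-\tfrac h2(\Phi_2+\Phi_3/\sqrt3)$ agree with the paper's choice) and then using orthogonality against admissible fields built from arbitrary $\u\in C_c^\infty(\Omega)^d$ to identify $h\Phi_1-q_B\nabla B$ as the weak gradient of $h\q$; item~\ref{itm:reg-Ah-qinv} by reversing the computation. One concrete slip needs fixing: your displayed identity in item~\ref{itm:reg-Ah-q} is wrong as written. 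After substituting the constraints the integrand is
\begin{equation*}
h\Phi_1\cdot\u \;-\; q_B\,\u\cdot\nabla B \;+\; h\q\,\diver\u ,
\end{equation*}
with a minus sign on the $q_B$ term (since $h\Phi_2=-q_B$) and, crucially, a surviving $h\q\,\diver\u$ term coming from $-\tfrac{h^2}{2}\l(\Phi_2+\Phi_3/\sqrt3\r)\diver\u$. In your display the two $\tfrac h2 q_B\diver\u$ terms cancel and no $\diver\u$ contribution remains, from which one could conclude nothing about $\nabla(h\q)$. Your following sentence states the correct conclusion, so this is the bookkeeping error you yourself flagged rather than a wrong method, but the argument only closes once the display is corrected.

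A secondary point on item~\ref{itm:reg-Ah-qinv}: you justify the absence of boundary terms by density in $\Ahb$ of elements with compactly supported $\u$. On a bounded domain $C_c^\infty(\Omega)^d$ is not dense in $\Hdiv$, so that density argument does not go through; the paper instead refers back to the proof of \lemref{lem:proj-Rn}, i.e.\ the whole-space case, where the boundary pairing is killed by decay at infinity. On a genuinely bounded domain the pairing $\int_{\partial\Omega} h\q\,\u\cdot\nu\,\d s(x)$ does not vanish in general, which is precisely why the trace conditions of \secref{sec:proj-dt-bd} are introduced later.
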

\begin{proof}
The proof of \ref{itm:reg-Ah-U} follows from the second constraint using that $\sigma \in\Lh$ and $1/h \in L^{\infty}(\Omega)$. 
Further, \ref{itm:reg-Ah-qinv} follows from the proof of \lemref{lem:proj-Rn}. 
It remains to prove \ref{itm:reg-Ah-q}.

For a function $\Phi  = (\phi_1, \phi_2,\phi_3)^\top \in \Ahb^\perp \subset \Lh^{d+2}$ we choose $\left(\q,q_B\right)\in L^2(\Omega)$ as
\begin{align*}
q_B \coloneqq - h \phi_2
\qquad\textrm{and}\qquad
\q \coloneqq  - \frac{h}{2} \left(\phi_2 + \frac{\phi_3}{\sqrt{3}}\right). 
\end{align*}
By a direct computation we find that $h \phi_3 = - \sqrt{3}\left(2 \q - q_B\right)$ which agrees with the third component of $\Psi_h\left(\q,q_B\right)$ and uniqueness is given. 
Hence, it remains to identify the first components. 
Since $\Phi\in\Ahb^\perp$, we have for any $V = (V_1,V_2,V_3)^\top \in\Ahb$ that
\begin{equation}\label{eq:reg-Ah}
\mat{r@{~}c@{~}l}{\ds
0 
&=&\ds
\ll V,\Phi \rr_{h}
%\\
%&=&\ds
%\ll \left( V_1,V_1 \cdot \nabla B -\frac{h}{2} \nabla \cdot V_1, -\frac{h}{2\sqrt{3}} \nabla \cdot V_1\right)^\top,\Phi \rr_{h}
%\\&=&\ds
=
\int_{\Omega}  V_1 \cdot h\left(\phi_1   +  \phi_2  \nabla B\right) \dx   
- \int_{\Omega}  \frac{h^2}2 \left(\phi_2 + \frac{1}{\sqrt{3}}  \phi_3\right)  \nabla \cdot V_1  \dx.
}\end{equation}
This implies that $h(\phi_1   +   \phi_2   \nabla B) \in L^2(\Omega)^d$ is the weak gradient of 
$-\frac{h^2}2\left(\phi_2 + \frac{1}{\sqrt{3}}  \phi_3\right) = h \q$. 
It follows that $h\q \in H^1(\Omega)$ and that
\begin{equation*}
h\phi_1
=-\nabla\(\frac{h^2}2\(\phi_2 + \frac{1}{\sqrt{3}}  \phi_3\)\)-h\phi_2   \nabla B
=\nabla\(h\q\)+q_B\nabla B.
\end{equation*}
This identifies the first component of $\Psi_h\left(\q,q_B\right)$ and finishes the proof.
\end{proof}
Thanks to \propref{prop:reg-Ah} the inverse mapping $\Psi_h^{-1}\colon \Ahb^\perp \to  L^2(\Omega)^2$ with $\Psi_h^{-1}\circ \Psi_h=\I$ exists. 
In fact, it is given by 
\begin{equation}\label{def:psi-inv}
\Psi_h^{-1} \left( \Phi\right) 
=-h\left(
\frac{1}{2} \left(\phi_2 + \frac{\phi_3}{\sqrt{3}}\right), \phi_2 \right) \qquad \text{ for } \Phi = \left(\phi_1, \phi_2,\phi_3\right)^\top \in \Ahb^\perp. 
\end{equation}
By linearity of the system of equations~\eqref{eq:GN-dt-proj} and orthogonality, uniqueness of solutions follows.  
Thus, by \lemref{lem:proj-Rn} and \lemref{prop:reg-Ah} we have that the unique solution is given by the projection and no extra assumption on the Sobolev regularity is needed.  

\begin{lem}[Well-posedness on $\Omega = \R^d$]\label{lem:wellp-Rd} 
Let \hypref{hyp:B-h-dt} be satisfied on $\Omega = \R^d$. 

 Then, for any $U^* \in \LRh^{d+2}$ there exists a unique solution to the correction step~\eqref{eq:GN-dt-proj} on $\Omega$, consisting of functions~~$U \in \Ahb$ and $\left(\q,q_B\right) \in L^2(\R)^2$ with $h\q \in H^1(\R^d)$. 
The solution is given by 
\begin{equation*}
U = \Pih(U^*) \qquad \text{ and } \qquad \left( \q, q_B \right) = \Psi_h^{-1}\left(\frac{U^*-U}{\dt}\right).  
\end{equation*}
\end{lem}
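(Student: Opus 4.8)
The plan is to establish existence, uniqueness, and the explicit formula by combining the projection lemma, the regularity proposition, and the inverse map already constructed. First I would prove \emph{existence}: given $U^* \in \LRh^{d+2}$, set $U \coloneqq \Pih(U^*)$, which is well-defined since $\Ahb$ is a closed subspace of the Hilbert space $\LRh^{d+2}$. By definition of the orthogonal projection, $U \in \Ahb$, so the constraints~\eqref{eq:GNconstraints-dt-proj} are satisfied, and $U^* - U \in \Ahb^\perp$. Applying \propref{prop:reg-Ah}\ref{itm:reg-Ah-q} to $\Phi \coloneqq (U^*-U)/\dt \in \Ahb^\perp$ yields a unique pair $(\q,q_B) \in L^2(\R^d)^2$ with $h\q \in H^1(\R^d)$ and $\Psi_h(\q,q_B) = (U^*-U)/\dt$; rearranging gives exactly $U = U^* - \dt\,\Psi_h(\q,q_B)$, which is~\eqref{eq:GN-dt-proj-U}. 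Hence $(U,\q,q_B)$ solves the correction step~\eqref{eq:GN-dt-proj}, and $\nabla(h\q) \in L^2(\R^d)^d$ holds automatically from $h\q \in H^1(\R^d)$.

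Next I would prove \emph{uniqueness}. Suppose $(U, \q, q_B)$ is any solution in the stated class, i.e.\ $U \in \Ahb$ (equivalently, $U \in \LRh^{d+2}$ satisfying the constraints) and $(\q,q_B) \in L^2(\R^d)^2$ with $h\q \in H^1(\R^d)$. Then $\nabla(h\q) \in L^2(\R^d)^d$, so the hypotheses of \lemref{lem:proj-Rn} are met, and that lemma gives $U = \Pih(U^*)$ directly. Since $\Pih$ is single-valued, $U$ is unique. For the pressures, $\Psi_h(\q,q_B) = (U^* - U)/\dt$ is then a fixed element of $\Ahb^\perp$ (by \lemref{lem:proj-Rn} it lies in $\Ahb^\perp$, and it is determined by the now-fixed $U$), and by the injectivity part of \propref{prop:reg-Ah}\ref{itm:reg-Ah-q} — equivalently, by applying the explicit inverse $\Psi_h^{-1}$ from~\eqref{def:psi-inv} — we recover $(\q,q_B) = \Psi_h^{-1}\bigl((U^*-U)/\dt\bigr)$ uniquely. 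This simultaneously establishes the claimed closed-form expression for the solution.

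Since essentially all the analytic work has been front-loaded into \lemref{lem:proj-Rn} and \propref{prop:reg-Ah} (in particular, the functional-analytic subtleties of the weighted spaces under \hyprefH, the Gau\ss--Green argument on $\R^d$, and the construction of $\Psi_h^{-1}$), the proof of this well-posedness statement itself is essentially a bookkeeping assembly of those ingredients, and I do not anticipate a genuine obstacle. The one point requiring a line of care is checking that the regularity class in the statement — $(\q,q_B) \in L^2(\R^d)^2$ with $h\q \in H^1(\R^d)$ — is precisely the one in which $\Psi_h$ acts bijectively onto $\Ahb^\perp$, so that no spurious non-uniqueness enters through the pressure variables; but this is exactly what \propref{prop:reg-Ah}\ref{itm:reg-Ah-q} and~\ref{itm:reg-Ah-qinv} together with the formula~\eqref{def:psi-inv} deliver. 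I would therefore keep the proof to a few sentences, citing \lemref{lem:proj-Rn} for $U = \Pih(U^*)$ and \propref{prop:reg-Ah} for the pressure identification, and noting that linearity plus orthogonality of the decomposition $\LRh^{d+2} = \Ahb \oplus \Ahb^\perp$ gives uniqueness.
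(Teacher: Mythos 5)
Your proposal is correct and follows essentially the same route as the paper, which (immediately before the lemma) derives the result by combining \lemref{lem:proj-Rn} with \propref{prop:reg-Ah} and invoking linearity plus the orthogonality of the decomposition for uniqueness. Your only addition is to spell out explicitly that $h\q\in H^1(\R^d)$ supplies the hypothesis $\nabla(h\q)\in L^2(\R^d)^d$ needed to apply \lemref{lem:proj-Rn}, which is a harmless and accurate elaboration.
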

In the next section we aim for a similar result for bounded spatial domains. 

\begin{rmk}[Weighted spaces]
It is possible to make sense of weighted spaces $L^2(\Omega;\omega)$ for weaker notions of weight functions $\omega$ than the one we assume for $h$ in \hyprefH, cf.~\cite{HKM.2006}. 
In fact, by \hyprefH the weighted space $\Lh$ and the standard Lebesgue space $L^2(\Omega)$ agree. 
Without using \hyprefH for $U \in \Ahb$ and $\Psi_h\left(\q,q_B\right) \in  \Ahb^\perp$ as above one finds that
\begin{equation*}
\nabla\cdot\u\in L^2\left(\Omega;h^3\right),
\quad
q_B\in L^2\left(\Omega;1/h\right),
\quad 
h\q \in L^2\left(\Omega;1/h^3\right)
\quad\textrm{and}\quad 
\nabla\left(h\q\right)\in L^2\left(\Omega;1/h\right)^d. 
\end{equation*} 
Recall that under \hyprefH this is equivalent to the statement in \propref{prop:reg-Ah}. 
\end{rmk}
 
We choose the setting of unweighted Sobolev spaces in order to have classical trace theory at hand in~\secref{sec:proj-dt-bd}. 

\subsection{\texorpdfstring{Bounded spatial domain $\Omega\subset\R^d$}{Bounded spatial domain}}

\label{sec:proj-dt-bd}

In this section we identify a class of boundary conditions for which the projection structure of the system of equations encountered in~\secref{sec:proj-dt-unbd} is maintained. 
More specifically, we aim to pose boundary conditions yielding well-posedness analogously as in \lemref{lem:wellp-Rd}. 
We consider an open bounded set $\Omega \subset \R^d$ with Lipschitz boundary $\partial \Omega$. 
Recall that the proof of the projection structure in \lemref{lem:proj-Rn} relies on the Gau\ss--Green Theorem with boundary terms vanishing at infinity. 
In the case of a bounded domain the remaining term in the duality relation is 
\begin{equation}\label{eq:bc-motiv-boundaryterms}
-\int_{\Omega} \diver\left( h \q \; V_1\right) \dx = -\int_{\partial\Omega} h \q \, V_1 \cdot \nu \,\d s(x),
\end{equation}
for any $V = (V_1,V_2,V_3)^\top \in \Ahb $ and any $\Phi =\left(\Phi_1,\Phi_2,\Phi_3\right) \in \textrm{Im}\left(\Psi_h\right)$ with $\q$ the first component of $\Psi_h^{-1}\left(\Phi\right)$, provided the traces are sufficiently smooth.
Here $\nu$ denotes the outer unit normal on $\partial \Omega$. 
This motivates our choice of boundary conditions preserving a projection structure for homogeneous boundary conditions, which is available in the case of the whole space domain. 
For this we supplement the definition of the admissible space $\Ahb$ and the pressure functions by certain zero boundary values such that
\begin{equation*}%\label{eq:bc-motiv-duality}
\ll V,\Phi \rr_{h} = 0,
\end{equation*}
for all $V\in \Ahb$ and any $\Phi \in \textrm{Im}\left(\Psi_h\right)$ that satisfy those homogeneous boundary conditions to be specified in the following.

For the scope of this section, we assume that the boundary $\partial\Omega$ can be decomposed into subsets on each of which one of the factors of the integrand vanishes. 
\begin{hyp}[Decomposition of the boundary $\partial \Omega$]\label{hyp:dom-dt}
Let~~$\Omega \subset \mathbb{R}^d$ be an open bounded and connected set with  Lipschitz boundary $\partial \Omega$. 
Assume that there exist relatively open sets $\Gamma_u, \Gamma_{hq} \subset \partial \Omega$ with finitely many connected components decomposing the boundary $\partial\Omega$, i.e., we have that~~$\partial\Omega =\overline{\Gamma_u}\cup\overline{\Gamma_{hq}} $ and~~$\Gamma_{u}\cap\Gamma_{hq}=\emptyset$. 
\end{hyp}
A domain $\Omega$ with finitely many connected components can be treated componentwise, and hence the assumption of $\Omega$ being connected is not very restrictive. 
Note that the decomposition of $\partial \Omega$ is considered as given for the correction step but is fixed within one time step in the same sense as $h$, cf.~\rmkref{rmk:h-fixed}. 
In~\secref{sec:fully-discr-bd} we discuss possible choices of decomposition for specific applications.  

Now let us consider the following formal boundary conditions
\begin{equation}\label{eq:bc-dt-inhom}
\u \cdot \nu|_{\Gamma_u} = \bcu
\qquad\textrm{and}\qquad
h\q|_{\Gamma_{hq}} = \bchq
 \end{equation}
for given functions $\bcu\colon\Gamma_u\to\R$ and $\bchq\colon\Gamma_{hq}\to\R$. 

Let us comment on the function space framework in which we formulate the boundary conditions. 
Any function in $ H^1(\Omega)$ admits a trace in $H^{1/2}(\partial \Omega)$, and the trace operator mapping $ H^1(\Omega)$ to $H^{1/2}(\partial \Omega)$ is linear, bounded and onto. 
Similarly, the normal trace space of $\Hdiv$ is $H^{-1/2}(\partial\Omega)$. 
Here $H^{-1/2}(\partial \Omega)$ is the dual space of $H^{1/2}(\partial \Omega)$ with respect to $L^2(\partial \Omega)$ and we denote the duality relation by $\ll \cdot, \cdot \rr_{H^{-1/2}(\partial \Omega), H^{1/2}(\partial \Omega)}$. 
Also the trace operator mapping $ \Hdiv$ to $H^{-1/2}(\partial\Omega)$ is linear, bounded and onto. 
The following integration by parts formula holds for any $v \in \Hdiv$ and $\phi \in H^{1}(\Omega)$
\begin{align}\label{eq:ibp-dual}
\int_{\Omega} v \cdot \nabla \phi \dx + \int_{\Omega} \diver v \, \phi \dx = 
\ll v \cdot \nu, \phi \rr_{H^{-1/2}(\partial \Omega), H^{1/2}(\partial \Omega)}.  
\end{align}
cf.~\cite[Ch.~I.2.2]{GR.1986}, where for readability we do not introduce notation for the traces operators. 
Note that $H^{1/2}(\partial \Omega) \subset L^2(\partial \Omega)$, and hence trace functions can be restricted to $\Gamma_{hq} \subset \partial \Omega$.  
Since $h\q \in H^1(\Omega)$ we thus may consider boundary data $\bchq \in H^{1/2}(\Gamma_{hq})$. 
However, distributions in $H^{-1/2}(\partial \Omega)$ cannot in general be restricted to subsets of $\partial \Omega$ since the respective trace operator is not continuous, cf.~\cite{GH.2015.S}. 
Thanks to the properties of functions in the admissible set $\Ahb$ given by \lemref{prop:reg-Ah} we have that $\u\in \Hdiv$ and thus it has a trace in $H^{-1/2}\left(\partial\Omega\right)$. 
To impose data on $\u \cdot \nu |_{\Gamma_u}$ in a suitable sense we have to specify how to impose conditions on a restriction of a distribution in $H^{-1/2}\left(\partial\Omega\right)$ to a part of the boundary in a suitably weak sense.
\medskip

As indicated by~\eqref{eq:bc-motiv-boundaryterms} and~\eqref{eq:ibp-dual} homogeneous boundary conditions, i.e., choosing $\bcu \coloneqq 0$ and $\bchq \coloneqq 0$ ensure the projection property since then the boundary term vanishes, see~\secref{sec:dt-bd-hom} below. 
The case of inhomogeneous boundary conditions can be reduced to the one of homogeneous boundary conditions by the standard approach of reference functions, see~\secref{sec:dt-bd-inhom}. 

\subsubsection{Homogeneous boundary conditions}\label{sec:dt-bd-hom}
We consider homogeneous boundary conditions in~\eqref{eq:bc-dt-inhom}, i.e.,
\begin{align}\label{eq:bc-dt-hom}
\u \cdot \nu|_{\Gamma_u} = 0
\qquad\textrm{and}\qquad
h\q|_{\Gamma_{hq}} = 0,
\end{align}
in a sense of traces to be specified.

Let us first introduce the spaces with homogeneous conditions on the respective traces. 
For this purpose let $\Gamma \subset \partial \Omega$ be a relatively open subset with finitely many connected components and let $\Gc=(\partial \Omega \setminus \Gamma)^\circ$ be the relative open complement of $\Gamma$ in $\partial \Omega$. 
Now we define the subspaces $H^1_{\Gc}(\Omega) \subset  H^1(\Omega)$ and $H_{\Gamma}(\divergence;\Omega) \subset \Hdiv$, by
\begin{align*}
H^1_{\Gc}(\Omega) 
&\coloneqq 
\left\{f \in H^1(\Omega)\colon f|_{\Gc} = 0 \;\;\text{ as trace in } H^{1/2}\left(\Gc\right)\right\},
\\
H_{\Gamma}(\divergence;\Omega)
&\coloneqq 
\left\{v \in \Hdiv \colon \ll v \cdot \nu, f \rr_{H^{-1/2}(\partial \Omega), H^{1/2}(\partial \Omega)} = 0 \;\; \text{ for all } f \in H^1_{\Gc}(\Omega)\right\}. 
\end{align*}
Note that by the continuity of the trace operators both spaces $H^1_{\Gc}(\Omega)$ and $H_{\Gamma}(\divergence;\Omega)$ are closed. 
Now we may introduce the space of admissible functions with a homogeneous condition on the normal trace. 
\begin{defi}[Space of admissible functions with homogeneous trace condition]\label{def:Ah-Pi-bd-dt-hom} 
For an open bounded set~~$\Omega \subset \R^d$ with Lipschitz boundary $\partial \Omega$ let $\Ahb$ be as in \defref{def:Ah}. 
For a relatively open subset $\Gamma \subset \partial \Omega$ with finitely many connected components
we denote the space of admissible functions with homogeneous condition on the normal trace by
\begin{equation*}
\Ahg \coloneqq \Ahb \cap \left(H_{\Gamma}(\divergence;\Omega) \times L^2(\Omega;h) \times L^2(\Omega;h)\right).
\end{equation*} 
\end{defi}
Due to the fact that both $\Ahb$ and $H_{\Gamma}(\divergence;\Omega)$ are closed subspaces of the respective spaces it follows that $\Ahg$ is a closed linear subspace of $\Ahb\subset\Lh^{d+2}$.  
Consequently, the $\ll \cdot, \cdot \rr_{h}$-orthogonal projection $\Pihg$ is well-defined, cf.~\eqref{def:Pi-2}, and the decomposition of functions in $\Lh^{d+2}$ into $\Ahg$ and the $\ll \cdot,\cdot \rr_{h}$-orthogonal complement $\Ahg^\perp$ is unique. 
The following result highlights the link between the adjointness of the differential operators in $H^1_{\Gc}(\Omega)$ and $H_{\Gamma}(\divergence;\Omega)$ and the orthogonality of the spaces $\Ahg$ and $\Ahg^\perp$.

\begin{lem}[Characterization of $\Ahg^\perp$]\label{lem:char-Ah0perp}
Let \hypref{hyp:B-h-dt} be satisfied on an open bounded set $\Omega\subset \R^d$ with Lipschitz boundary $\partial \Omega$. 
Let $\Gamma \subset \partial \Omega$ be a relatively open subset with finitely many connected components. 

Then, there is a one-to-one correspondence between $\Phi \in \Ahg^\perp$ and a pair of functions $\left(\q,q_B\right)\in L^2(\Omega)^2$ with $h\q \in H^1_{\Gc}(\Omega)$ such that 
$\Psi_h\left(\q,q_B\right) = \Phi$. 
\end{lem}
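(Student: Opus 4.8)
The plan is to mimic the proof of \propref{prop:reg-Ah}\ref{itm:reg-Ah-q} and add the boundary bookkeeping. Given $\Phi=(\phi_1,\phi_2,\phi_3)^\top\in\Ahg^\perp$, I would first define the candidate pressure functions exactly as in the whole-space case, namely $q_B\coloneqq -h\phi_2$ and $\q\coloneqq-\frac{h}{2}(\phi_2+\phi_3/\sqrt{3})$, so that the second and third components of $\Psi_h(\q,q_B)$ automatically match $\phi_2$ and $\phi_3$, and uniqueness of this choice is clear. The content is then to show two things: that $h\q\in H^1_{\Gc}(\Omega)$, and that the first component of $\Psi_h(\q,q_B)$ equals $h\phi_1$. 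Conversely, given $(\q,q_B)\in L^2(\Omega)^2$ with $h\q\in H^1_{\Gc}(\Omega)$, I would show $\Psi_h(\q,q_B)\in\Ahg^\perp$.

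For the forward direction, I would test orthogonality $\ll V,\Phi\rr_h=0$ against $V=(V_1,V_2,V_3)^\top\in\Ahg$. Expanding as in~\eqref{eq:reg-Ah} and using the constraints defining $\Ahb$ to eliminate $V_2,V_3$ in favour of $\diver V_1$, I get
\begin{equation*}
\int_\Omega V_1\cdot h\bigl(\phi_1+\phi_2\nabla B\bigr)\dx = \int_\Omega \frac{h^2}{2}\Bigl(\phi_2+\tfrac{1}{\sqrt3}\phi_3\Bigr)\diver V_1\dx = -\int_\Omega h\q\,\diver V_1\dx
\end{equation*}
for all such $V_1$. Since $V_1$ ranges over all of $H_\Gamma(\divergence;\Omega)$ (the $w,\sigma$ components are then determined), and in particular over $C_c^\infty(\Omega)^d$, this identity first shows that $h\q$ has weak gradient $h(\phi_1+\phi_2\nabla B)\in L^2(\Omega)^d$, so $h\q\in H^1(\Omega)$. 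Then the integration-by-parts formula~\eqref{eq:ibp-dual} turns the displayed identity into $\ll V_1\cdot\nu,\,h\q\rr_{H^{-1/2}(\partial\Omega),H^{1/2}(\partial\Omega)}=0$ for all $V_1\in H_\Gamma(\divergence;\Omega)$; by the definition of $H_\Gamma(\divergence;\Omega)$ as the annihilator of $H^1_{\Gc}(\Omega)$, and since the normal trace operator is onto $H^{-1/2}(\partial\Omega)$, a biduality/annihilator argument forces $h\q$ to lie in the closed space $H^1_{\Gc}(\Omega)$, i.e., $h\q|_{\Gc}=0$. The first component of $\Psi_h(\q,q_B)$ is then $\frac1h(\nabla(h\q)+q_B\nabla B)=\phi_1+\phi_2\nabla B-\phi_2\nabla B=\phi_1$, as needed.

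For the converse, take $(\q,q_B)$ with $h\q\in H^1_{\Gc}(\Omega)$, set $\Phi\coloneqq\Psi_h(\q,q_B)$, which lies in $\Lh^{d+2}$ by \hypref{hyp:B-h-dt}, and check $\ll V,\Phi\rr_h=0$ for every $V\in\Ahg$. Running the same expansion backwards, the volume terms combine to $-\int_\Omega h\q\,\diver V_1\dx - \int_\Omega \nabla(h\q)\cdot V_1\dx$, and~\eqref{eq:ibp-dual} rewrites this as $-\ll V_1\cdot\nu,\,h\q\rr_{H^{-1/2}(\partial\Omega),H^{1/2}(\partial\Omega)}$, which vanishes because $h\q\in H^1_{\Gc}(\Omega)$ and $V_1\in H_\Gamma(\divergence;\Omega)$. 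Hence $\Phi\in\Ahg^\perp$. Together with the uniqueness of $(\q,q_B)$ noted at the start, this establishes the claimed one-to-one correspondence.

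The main obstacle is the boundary regularity step: deducing $h\q|_{\Gc}=0$ from the orthogonality relation. The subtlety is that $V_1\cdot\nu$ only lives in $H^{-1/2}(\partial\Omega)$ and cannot be freely restricted to $\Gamma$, so one must argue at the level of the pairing on all of $\partial\Omega$. The clean way is the functional-analytic annihilator identity: $H_\Gamma(\divergence;\Omega)$ is defined precisely so that $\{V_1\cdot\nu : V_1\in H_\Gamma(\divergence;\Omega)\}$ annihilates exactly $H^1_{\Gc}(\Omega)$ inside $H^{1/2}(\partial\Omega)$ (using surjectivity of the normal trace and the fact that $H^1_{\Gc}(\Omega)$ is closed, so it equals its double annihilator); then $h\q$ annihilating that image means $h\q\in H^1_{\Gc}(\Omega)$. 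I would state this as a short lemma or cite it, rather than grinding through trace estimates, and I expect the rest to be bookkeeping identical to \propref{prop:reg-Ah}.
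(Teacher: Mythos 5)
Your proposal is correct and its skeleton coincides with the paper's: define $(\q,q_B)$ by the same explicit formulas, deduce $h\q\in H^1(\Omega)$ by testing against compactly supported $V_1$ (this is exactly the paper's remark that the proof of \propref{prop:reg-Ah}.ii remains valid with $\Ahb$ replaced by $\Ahg$), identify the first component, and prove the converse by running \eqref{eq:ibp-dual} backwards. The one place where you genuinely diverge is the derivation of $h\q|_{\Gc}=0$. The paper tests the identity $0=\ll V_1\cdot\nu,h\q\rr_{H^{-1/2}(\partial\Omega),H^{1/2}(\partial\Omega)}$ with \emph{smooth} $V\in\Ahg$, for which the pairing becomes a genuine boundary integral that splits over $\Gamma$ and $\Gc$; the $\Gamma$ part vanishes by membership in $\Ahg$, and arbitrariness of the smooth normal trace on $\Gc$ forces the conclusion. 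You instead invoke the double-annihilator (bipolar) identity: the set of normal traces of elements of $H_{\Gamma}(\divergence;\Omega)$ is, by definition and by surjectivity of the normal trace operator, exactly the annihilator in $H^{-1/2}(\partial\Omega)$ of the trace image of $H^1_{\Gc}(\Omega)$, so $h\q$ annihilating it places its trace in the closure of that image. Your route avoids the density-of-smooth-test-fields step that the paper leaves implicit, at the price of needing the trace image $\{f|_{\partial\Omega}\colon f\in H^1_{\Gc}(\Omega)\}$ to be closed in $H^{1/2}(\partial\Omega)$. On that point you are slightly imprecise: closedness of $H^1_{\Gc}(\Omega)$ in $H^1(\Omega)$ does not by itself give closedness of its image under the (non-injective) trace map. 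The correct justification is that this image equals the kernel of the continuous restriction operator from $H^{1/2}(\partial\Omega)$ to $\Gc$, using a standard extension result to see that every boundary function vanishing on $\Gc$ lifts to an element of $H^1_{\Gc}(\Omega)$. With that one sentence added, your argument is complete and fully rigorous.
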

\begin{proof}
First note that the proof of \propref{prop:reg-Ah}.\ref{itm:reg-Ah-q} is still valid if we replace $\Ahb$ by $\Ahg$. 
This implies that for any $\Phi \in \Ahg^\perp$ there exist there exists a  unique pair of functions $\left(\q,q_B\right)\in L^2(\Omega)^2$ such that $\Psi_h\left(\q,q_B\right)=\Phi$ and additionally $h\q\in H^1(\Omega)$. 
It remains to show that $h\q |_{\Gc} = 0$ in the sense of $H^{1/2}$-traces. 
Starting from~\eqref{eq:reg-Ah}, integrating by parts with~\eqref{eq:ibp-dual} 
we obtain in particular for any smooth function $V = (V_1,V_2,V_3)^\top \in\Ahg$ that
\begin{align*}
0 &= 
\ll V_1, h\q \cdot \nu \rr_{H^{-1/2}(\partial \Omega), H^{1/2}(\partial \Omega)} 
= \int_{\Gamma}   h\q V_1\cdot\nu \, \d s(x)  
 +\int_{\Gc}   h\q V_1\cdot\nu \,\d s(x),  
\end{align*}
and the boundary term on $\Gamma$ vanishes since $V \in \Ahg$. 
Since $\Gc$ is a Lipschitz curve with finitely many connected components we conclude that $hq_1|_{\Gc}=0$ in the sense of $H^{1/2}$-traces. 

The reverse implication follows combining the arguments in \lemref{lem:proj-Rn} and~\eqref{eq:ibp-dual} leading to
\begin{equation*}
\int_{\Omega} v \cdot \nabla \phi \dx + \int_{\Omega} \diver v \, \phi \dx = 0,
\end{equation*}
for any $\phi\in H^1_{\Gc}(\Omega)$ and any $v\in H_{\Gamma}(\divergence;\Omega)$.
\end{proof}

Now we are in the position to deduce the following well-posedness result. 

\begin{prop}[Homogeneous boundary conditions]
\label{prop:proj-dt-bd-hom}
Let \hypref{hyp:B-h-dt} and \hypref{hyp:dom-dt} be satisfied with a bounded open set~~$\Omega \subset \R^d$. 
 
For any function $U^* \in \Lh^{d+2}$ there exists a unique solution of the correction step~\eqref{eq:GN-dt-proj} on $\Omega$ subject to homogeneous boundary conditions~\eqref{eq:bc-dt-hom}, consisting of functions $U_0\in\Ahgu$ and $\left(\q_0,q_{B0}\right)\in L^2(\Omega)^2$ with $ h \q_0 \in H^1_{\Gamma_{hq}}(\Omega)$.  
The solution is given by
\begin{equation*}
U_0 = \Pi_h[\mathbb{A}_{h,\Gamma_u}] (U^*)
\qquad\textrm{and}\qquad
\left(\q_0,q_{B0}\right)=\Psi_h^{-1}\left(\frac{U^*-U_0}{\dt}\right).
\end{equation*}
\end{prop}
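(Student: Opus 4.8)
The plan is to combine the abstract Hilbert space decomposition underlying the projection $\Pi_h[\Ahgu]$ with the characterization of $\Ahgu^\perp$ from \lemref{lem:char-Ah0perp}, and to verify that the resulting pair satisfies the correction step~\eqref{eq:GN-dt-proj} with the correct homogeneous boundary conditions. First I would set $U_0 \coloneqq \Pi_h[\Ahgu](U^*)$, which is well-defined because $\Ahgu$ is a closed linear subspace of $\Lh^{d+2}$. By definition of the orthogonal projection, the difference $U^* - U_0$ lies in $\Ahgu^\perp$, so I would apply \lemref{lem:char-Ah0perp} (with $\Gamma = \Gamma_u$, so $\Gc = \Gamma_{hq}$) to obtain a unique pair $\left(\q_0, q_{B0}\right) \in L^2(\Omega)^2$ with $h\q_0 \in H^1_{\Gamma_{hq}}(\Omega)$ such that $\Psi_h\left(\q_0, q_{B0}\right) = (U^* - U_0)/\dt$. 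Rearranging gives exactly~\eqref{eq:GN-dt-proj-U}, and since $U_0 \in \Ahgu \subset \Ahb$, the constraints~\eqref{eq:GNconstraints-dt-proj} hold automatically, while the membership $U_0 \in H_{\Gamma_u}(\divergence;\Omega)$ together with $h\q_0|_{\Gamma_{hq}} = 0$ encodes the homogeneous boundary conditions~\eqref{eq:bc-dt-hom} in the stated weak sense.

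Next I would address uniqueness. Suppose $(U_0, \q_0, q_{B0})$ and $(\tilde U_0, \tilde\q_0, \tilde q_{B0})$ are two solutions in the stated classes. Both $U_0$ and $\tilde U_0$ lie in $\Ahgu$, and both $\Psi_h(\q_0,q_{B0})$ and $\Psi_h(\tilde\q_0,\tilde q_{B0})$ lie in $\Ahgu^\perp$ by \lemref{lem:char-Ah0perp}. Taking the difference of the two instances of~\eqref{eq:GN-dt-proj-U} shows that $U_0 - \tilde U_0 \in \Ahgu$ equals a multiple of an element of $\Ahgu^\perp$, hence lies in $\Ahgu \cap \Ahgu^\perp = \{0\}$; thus $U_0 = \tilde U_0$, and then $\Psi_h(\q_0 - \tilde\q_0, q_{B0} - \tilde q_{B0}) = 0$, so injectivity of $\Psi_h$ on $\Ahgu^\perp$ (equivalently, existence of the explicit inverse $\Psi_h^{-1}$ from~\eqref{def:psi-inv}, valid here since the argument of \propref{prop:reg-Ah}.\ref{itm:reg-Ah-q} carries over as noted in the proof of \lemref{lem:char-Ah0perp}) forces $(\q_0, q_{B0}) = (\tilde\q_0, \tilde q_{B0})$. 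This also yields the stated formula $\left(\q_0, q_{B0}\right) = \Psi_h^{-1}\left((U^* - U_0)/\dt\right)$.

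The only genuinely delicate point is making sure the weak formulation of the boundary condition $\u \cdot \nu|_{\Gamma_u} = 0$ is exactly the one baked into the definition of $H_{\Gamma_u}(\divergence;\Omega)$, namely that $\ll \u \cdot \nu, f \rr_{H^{-1/2}(\partial\Omega), H^{1/2}(\partial\Omega)} = 0$ for all $f \in H^1_{\Gamma_{hq}}(\Omega)$ — since a pointwise restriction of an $H^{-1/2}(\partial\Omega)$ distribution to $\Gamma_u$ need not exist, this is the sharpest sense in which the condition can be posed, and it is precisely what \lemref{lem:char-Ah0perp} is tailored to. The remaining verifications (that $U_0 \in \Ahgu$ indeed implies $\u_0 \in \Hdiv$ via \propref{prop:reg-Ah}.\ref{itm:reg-Ah-U}, and that the energy balance~\eqref{eq:energy} transfers verbatim using orthogonality of $U_0$ and $U^* - U_0$) are routine. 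In short, the proof is essentially a dictionary translation: $U = \Pi_h[\Ahgu](U^*)$ is the $\Ahgu$-component, $\dt\,\Psi_h(\q_0,q_{B0})$ is the $\Ahgu^\perp$-component, and \lemref{lem:char-Ah0perp} guarantees the latter is realized by an admissible pressure pair with the correct trace on $\Gamma_{hq}$.
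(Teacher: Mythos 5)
Your proposal is correct and follows exactly the route the paper intends: the paper states this proposition as an immediate consequence of the orthogonal decomposition of $\Lh^{d+2}$ into $\Ahgu$ and $\Ahgu^\perp$ together with \lemref{lem:char-Ah0perp} (applied with $\Gamma=\Gamma_u$, $\Gc=\Gamma_{hq}$), which is precisely the argument you spell out. Your uniqueness argument via $\Ahgu\cap\Ahgu^\perp=\{0\}$ and the injectivity of $\Psi_h$ is likewise the intended one.
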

As a consequence of the projection structure, the energy conservation law~\eqref{eq:energy} still holds on the bounded domain with homogeneous boundary condition~\eqref{eq:bc-dt-hom}.

\subsubsection{Inhomogeneous boundary conditions}\label{sec:dt-bd-inhom}

As mentioned before, to consider inhomogeneous boundary conditions as in~\eqref{eq:bc-dt-inhom}, we have to give a sense to the restriction of distributions in $H^{-1/2}\left(\partial \Omega\right)$ to a part of the boundary. 
Due to the non-locality of such distributions, this has to be done in a weak sense.  
For this purpose let us introduce the equivalence relation that appears also in the definition of $H_{\Gamma}(\divergence;\Omega)$ above. 
For any $\widetilde v,\bcu \in H^{-1/2}\left(\partial \Omega\right)$, we define
\begin{equation}\label{equivalence}
\widetilde v\Gsim\bcu
\quad\textrm{iff}\quad
\ll \widetilde v ,f\rr_{H^{-1/2}(\partial \Omega), H^{1/2}(\partial \Omega)} = \ll \bcu, f \rr_{H^{-1/2}(\partial \Omega), H^{1/2}(\partial \Omega)}
\quad\text{for all}\;\;
f \in H^1_{\Gc}(\Omega).
\end{equation} 
We shall impose data on the normal trace in $H^{-1/2}(\partial \Omega)$ in the sense of this equivalence relation. 
This means in particular that only the equivalence class of the given boundary datum $\bcu$ is used. 
However, note that this way of imposing boundary values is very weak. In fact, imposing more regular data $\bcu$ does not lead to more regular normal traces in general. 
Even if one assumes that $\bcu \in H^{1/2}(\partial \Omega)$, the solution might not be sufficiently regular to ensure integrability of the normal trace. 
Only if $u$ is sufficiently regular, e.g., if $u \in H^1(\Omega)$, we know that the normal trace of $u$ agrees with $\bcu$ a.e. on $\Gamma$.

This ambiguity would already appear for more classical equations such as the incompressible Euler equations. 
Thus, we shall not elaborate on this here.

\begin{defi}[Admissible functions with inhomogeneous trace condition]\label{def:Ah-Pi-bd-dt}
Let \hypref{hyp:B-h-dt} and \hypref{hyp:dom-dt}  be satisfied. 
For any function $\bcu\in H^{-1/2}(\partial\Omega)$ we denote the set of admissible functions with 
normal trace equivalent to $\bcu$ on $\Gamma_u$ in the sense of~\eqref{equivalence} by
\begin{equation*}
\mAhguu \coloneqq \left\{V=\left(V_1,V_2,V_3\right)^\top \in \Ahb
\colon\quad
V_1 \cdot \nu\Gusim\bcu
\right\}.
\end{equation*}
\end{defi}
Note that $\mAhgu (0) = \mathbb{A}_{h,\Gamma_u}$. 
For $\bcu \neq 0$ the set $\mAhguu$ is not a linear, but an affine closed subspace.  
Hence, an affine projection mapping to $\mAhguu$ can be defined.

One may also include inhomogeneous boundary conditions on $h\q$.
For brevity of the notation we introduce the affine closed subset of functions in $H^1(\Omega)$ with given trace $\bchq \in H^{1/2}(\Gamma_{hq})$ by 
\begin{equation*}
\mHghq(\bchq)
\coloneqq \left\{f \in H^1(\Omega) \colon f|_{\Gamma_{hq}} = \bchq\;\; \text{ in the sense of } H^{1/2}\text{-traces}\right\},
\end{equation*}
and note that $\mHghq(0) = H^1_{\Gamma_{hq}}(\Omega)$. 
We apply the standard approach using reference functions to the problem to reduce the inhomogeneous case for both velocity and pressure functions to the homogeneous one considered in~\secref{sec:dt-bd-hom}. 
More precisely, for given $\bcu \in H^{-1/2}(\partial \Omega)$ and $\bchq \in H^{1/2}(\Gamma_{hq})$ we call any pair of functions $\Ur\in \mAhguu$ and $\qr\in L^2(\Omega)$ such that 
~~$h \qr \in \mHghq(\bchq)$ {\it reference functions}. 
Note that the reference functions do not have to satisfy a system of equations, but satisfy the given boundary values as specified.
The existence of such reference functions can be proved by classical extension results, see~\cite[Ch.~I]{GR.1986}.
Observe that one can reduce the problem as follows: 
Let the functions $U \in \mAhgu(\bcu)$ and $\left(\q,q_B\right)\in L^2(\Omega)^2$ such that $h\q \in \mHghq(\bchq)$ be solutions of the correction step~\eqref{eq:GN-dt-proj} subject to the inhomogeneous boundary conditions~\eqref{eq:bc-dt-inhom}. Then, setting 
\begin{equation*}
U_0=U-\Ur 
\qquad\textrm{and}\qquad
\q_0=\q-\qr \in L^2(\Omega),
\end{equation*}
 we find that $U_0\in \mathbb{A}_{h,\Gamma_u}$ and $\q_0 \in L^2(\Omega)$ such that $h \q_0 \in H^1_{\Gamma_{hq}}(\Omega)$.  
 Furthermore, by linearity of the equations we obtain that $U_0, \q_0$ 
are solutions of some system of equations depending on $\Ur, \qr$ subject to the homogeneous boundary conditions~\eqref{eq:bc-dt-hom}. 
For this system of equations subject to homogeneous boundary conditions \propref{prop:proj-dt-bd-hom} can be applied. 
With this strategy for the case of inhomogeneous boundary conditions we obtain well-posedness of the correction step on bounded domains. 

\begin{thm}[Inhomogeneous boundary conditions]\label{thm:proj-dt-bd-inhom}
Let \hypref{hyp:B-h-dt} and \hypref{hyp:dom-dt} be satisfied with a bounded open set~~$\Omega \subset \R^d$.   

For any given functions $U^* \in \Lh^{d+2}$, $\bcu\in H^{-1/2}(\partial \Omega)$ and $\bchq\in H^{1/2}(\Gamma_{hq})$
there exists a unique solution of the correction step~\eqref{eq:GN-dt-proj} on~~$\Omega$ subject to inhomogeneous boundary conditions~\eqref{eq:bc-dt-inhom}, 
consisting of functions $U\in\mAhgu(\bcu)$ and $\left(\q,q_{B}\right)\in L^2(\Omega)^2$ with $ h \q \in  \mHghq(\bchq)$.  
The solution is given by
\begin{equation}\label{eq:dt-sol-inh-1}
U=\Ur+\Pi_h[\mathbb{A}_{h,\Gamma_u}]\left(U^*-\Ur-\dt\Psi_h\left(\qr,0\right)\right)
\quad\textrm{and}\quad
\left(\q,q_{B}\right) = \Psi_h^{-1}\left(\frac{U^*-U}{\dt}\right),
\end{equation}
for any pair of reference functions $\Ur\in\mAhgu(\bcu)$ and~~$\qr\in L^2(\Omega)$ such that $h \qr \in \mHghq(\bchq)$. 
\end{thm}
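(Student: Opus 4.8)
The plan is to reduce the inhomogeneous problem to the homogeneous one treated in \propref{prop:proj-dt-bd-hom} via the reference-function substitution that was already described in the paragraph preceding the statement, and then to transfer back the existence, uniqueness and the explicit formula. First I would fix an arbitrary pair of reference functions $\Ur\in\mAhgu(\bcu)$ and $\qr\in L^2(\Omega)$ with $h\qr\in\mHghq(\bchq)$, whose existence is guaranteed by the classical extension results cited from~\cite[Ch.~I]{GR.1986}; note that $\Psi_h(\qr,0)\in\Lh^{d+2}$ by \hypref{hyp:B-h-dt} since $\nabla(h\qr)\in L^2(\Omega)^d$. Then I would set $U_0\coloneqq U-\Ur$ and $\q_0\coloneqq\q-\qr$, observe that $U_0\in\Ahgu$ (the normal-trace condition becomes homogeneous by linearity of the equivalence relation~\eqref{equivalence}) and $h\q_0\in H^1_{\Gamma_{hq}}(\Omega)$, and rewrite the correction step~\eqref{eq:GN-dt-proj} for $(U,\q,q_B)$ as an equivalent system for $(U_0,\q_0,q_{B0})$ with modified right-hand side $U^{**}\coloneqq U^*-\Ur-\dt\Psi_h(\qr,0)$, subject to the \emph{homogeneous} boundary conditions~\eqref{eq:bc-dt-hom}. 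Here one uses that $q_B$ appears only algebraically (it is not subject to a boundary condition), so the shift in $\q$ produces a corresponding shift in $q_B$, and the constraints~\eqref{eq:GNconstraints-dt-proj} are unaffected since they do not involve the pressure.

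Next I would apply \propref{prop:proj-dt-bd-hom} to the datum $U^{**}\in\Lh^{d+2}$: it yields a unique $U_0=\Pi_h[\mathbb{A}_{h,\Gamma_u}](U^{**})\in\Ahgu$ together with $(\q_0,q_{B0})\in L^2(\Omega)^2$, $h\q_0\in H^1_{\Gamma_{hq}}(\Omega)$, with $(\q_0,q_{B0})=\Psi_h^{-1}((U^{**}-U_0)/\dt)$. Undoing the substitution gives $U=\Ur+U_0$, which is precisely the first formula in~\eqref{eq:dt-sol-inh-1}, and $(\q,q_B)=(\qr,0)+(\q_0,q_{B0})$; substituting $U_0=U-\Ur$ and $U^{**}-U_0=U^*-U-\dt\Psi_h(\qr,0)$ and using linearity of $\Psi_h^{-1}$ together with $\Psi_h^{-1}(\Psi_h(\qr,0))=(\qr,0)$ turns the pressure formula into $(\q,q_B)=\Psi_h^{-1}((U^*-U)/\dt)$, the second formula in~\eqref{eq:dt-sol-inh-1}. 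Existence of a solution in the stated classes follows immediately, and one verifies directly that this $(U,\q,q_B)$ solves~\eqref{eq:GN-dt-proj} with the inhomogeneous boundary conditions~\eqref{eq:bc-dt-inhom}.

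For uniqueness I would take two solutions $(U,\q,q_B)$ and $(U',\q',q_B')$ in the stated classes, form their difference, observe it is a solution of the homogeneous problem of \propref{prop:proj-dt-bd-hom} with datum $U^*=0$ — this again uses linearity of~\eqref{eq:GN-dt-proj}, of the constraints, and of the boundary conditions in the equivalence-class sense — and conclude $U=U'$, hence $\q=\q'$ and $q_B=q_B'$ by the bijectivity of $\Psi_h^{-1}$ on $\Ahb^\perp$ from \propref{prop:reg-Ah}. A small point worth spelling out is that although the formula~\eqref{eq:dt-sol-inh-1} is written in terms of a particular choice of reference functions, the solution $(U,\q,q_B)$ itself does not depend on that choice; this is consistent with uniqueness and can be seen from the fact that the difference of two admissible reference pairs lies in $\Ahgu\times H^1_{\Gamma_{hq}}(\Omega)$, so the map $\Ur\mapsto\Ur+\Pi_h[\mathbb{A}_{h,\Gamma_u}](U^*-\Ur-\dt\Psi_h(\qr,0))$ is constant on the affine class of reference pairs.

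The main obstacle is not any deep estimate — everything is linear and the heavy lifting was done in \propref{prop:reg-Ah} and \propref{prop:proj-dt-bd-hom} — but rather the careful bookkeeping of the \emph{two} inhomogeneities simultaneously: one must check that the reference-function shift in $h\q$ genuinely converts the inhomogeneous $h\q$-boundary condition into the homogeneous one while only adding an $\Lh^{d+2}$-term $\dt\Psi_h(\qr,0)$ to the data (which is where $\nabla(h\qr)\in L^2$ is needed), and that the normal-trace condition $V_1\cdot\nu\Gusim\bcu$ behaves linearly under the shift by $\Ur$, i.e.\ $U\cdot\nu\Gusim\bcu$ and $\Ur\cdot\nu\Gusim\bcu$ imply $(U-\Ur)\cdot\nu\Gusim 0$, which is immediate from the definition~\eqref{equivalence} but deserves to be stated. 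Once these two reductions are in place the rest is algebraic rearrangement of~\eqref{eq:dt-sol-inh-1}.
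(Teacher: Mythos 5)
Your proposal is correct and follows essentially the same route as the paper's proof: reduction to the homogeneous case of \propref{prop:proj-dt-bd-hom} via the shift $U_0=U-\Ur$, $\q_0=\q-\qr$ with the modified datum $U^*-\Ur-\dt\Psi_h(\qr,0)$ (using bilinearity of $\Psi_h$), recovery of the second formula in~\eqref{eq:dt-sol-inh-1} from linearity of $\Psi_h^{-1}$, and uniqueness by applying the homogeneous result to the difference of two solutions. The additional remarks you spell out (linearity of the equivalence relation~\eqref{equivalence} under the shift, $\Psi_h(\qr,0)\in\Lh^{d+2}$, independence of the choice of reference functions) are consistent with, and slightly more explicit than, the paper's argument.
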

\begin{proof}
We start by showing that the functions given by~\eqref{eq:dt-sol-inh-1} are a solution of the correction step~\eqref{eq:GN-dt-proj} with inhomogeneous boundary conditions~\eqref{eq:bc-dt-inhom}. 
By replacing the unknowns $U = \Ur + U_0 \in \mAhgu(\bcu)$ and $\q = \qr + \q_0\in L^2(\Omega)$ with $h\q \in \mHghq(\bchq)$ let us rewrite~\eqref{eq:GN-dt-proj} as
\begin{equation*}
U^* = U + \dt \Psi_h\left( \q, q_B \right) 
=  U_0 + \Ur +  \dt \Psi_h\left( \qr + \q_0, q_B \right).
\end{equation*} 
The new unknown functions are $U_0 \in \Ahgu$ and $\q_0 \in L^2(\Omega)$ such that $h \q_0 \in H^1_{\Gamma_{hq}}(\Omega)$.
Thus, equivalently to solving the correction step with inhomogeneous boundary conditions, we want to find $U_0 \in \Ahgu$, and $\q_0, q_B$ with $h \q_0 \in H^1_{\Gamma_{hq}}(\Omega)$ such that
\begin{equation*}
U_0 = U^* - \Ur - \dt \Psi_h  \left( \qr , 0\right) - \dt \Psi_h \left(\q_0, q_B \right), 
\end{equation*}
where we also have used the bilinearity of $\Psi_h$.
With $U^* - \Ur - \dt \Psi_h  \left( \qr , 0\right) \in \Lh^{d+1}$, \propref{prop:proj-dt-bd-hom} ensures the existence of such solutions, given by 
\begin{align*}
U_0 
&= \Pi_h[\mathbb{A}_{h,\Gamma_u}]
\left(U^*-\Ur-\dt\Psi_h\left(\qr,0\right)\right),\\
\left(\q_0,q_{B} \right)
&= \Psi_{h}^{-1} \left(\frac{ U^*-\Ur-\dt\Psi_h\left(\qr,0\right)- U_0}{\dt}\right)
=  \Psi_{h}^{-1} \left(\frac{ U^*-U}{\dt}\right) - \left(\qr,0\right),
\end{align*}
where again bilinearity of $\Psi_h$ is used. 
We conclude that $U,\q,q_B$ given by~\eqref{eq:dt-sol-inh-1} satisfy~\eqref{eq:GN-dt-proj}. 

It remains to show uniqueness of solutions to the correction step~\eqref{eq:GN-dt-proj} with inhomogeneous boundary conditions~\eqref{eq:bc-dt-inhom}, which shows in particular that the solutions are independent of the reference functions $\Ur, \qr$.  
Assume that for given $U^* \in \Lh^{d+2}$ and boundary data $\bcu, \bchq$ as before there are two solutions to the problem~\eqref{eq:GN-dt-proj} subject to the inhomogeneous boundary conditions.  
Due to the linearity of the equations, the  difference of the solutions satisfies the system of equations~\eqref{eq:GN-dt-proj} for $U^* = 0$ and homogeneous boundary conditions. 
By \propref{prop:proj-dt-bd-hom} we have uniqueness of solutions, and since the trivial functions is a solution it follows that the two solutions agree. 
\end{proof}

For inhomogeneous boundary conditions the energy identity~\eqref{eq:energy} is not satisfied because orthogonality is lost through the affine shift by reference functions. 
Indeed, in the scalar product $\ll U, \Psi_h\left(\q, q_B \right) \rr_h$ the boundary term does not vanish.  
Thus, we arrive at the following energy identity
\begin{align*}\label{eq:energy_b}
\frac{1}{\dt} \left(
\norm{U}_h^2 - \norm{U^*}_h^2 \right)
&= 
- \dt \norm{\Psi_h\left(\q,q_B\right)}_h^2
  - 2 \ll U, \Psi_h\left(\q,q_B\right)\rr_{h}
\\&= 
- \dt\norm{\Psi_h\left(\q,q_B\right)}_h^2
 - 2\ll \u \cdot \nu , h \q \rr_{H^{-1/2}(\partial \Omega),H^{1/2}(\partial \Omega)}
\\&= 
- \dt\norm{\Psi_h\left(\q,q_B\right)}_h^2
  - 2
  \left( \int_{\Gamma_u}\bcu\, h\q\,\d s(x)
+\int_{\Gamma_{hq}} \left(\u\cdot\nu\right)\, \bchq\,\d s(x)
\right),
\end{align*}
where the last equality holds only if the traces are sufficiently regular.

\begin{rmk}Let us conclude this section with further remarks on the boundary conditions. 
\begin{enumerate}[label=(\roman*),leftmargin=0cm]\setlength{\itemindent}{1cm}
\item (Projection property)
Instead of including inhomogeneous boundary conditions  on the velocity by means of reference functions one might directly work with the affine projection $\widetilde{\Pi}_h[\mAhgu\left(\bcu\right)] $ mapping to the closed affine space $\mAhgu(\bcu)$. 
This is defined by 
\begin{equation*}
\norm{\widetilde{\Pi}_h[\mAhgu(\bcu)]\left(U^*\right) - U^*}_h^2 = \min_{V \in \mAhgu(\bcu)} \norm{V - U^*}_h^2 \quad \text{ for any } U^* \in \Lh^{d+2}. 
\end{equation*}
However, by this approach we do not gain the same insight into the boundary values of $h\q|_{\Gamma_{hq}}$ and cannot immediately include inhomogeneous boundary conditions on the pressure. 
This, in turn, will be of practical interest in~\secref{sec:fully-discr-bd}. 
In fact, one can show that the mapping $P_h\colon U^* \mapsto U$ given by \theoref{thm:proj-dt-bd-inhom} is a projection in the sense that  $P_h|_{\mAhgu(\bcu)}$ is the identity if and only if~~$\bchq = 0$ or $\Gamma_{hq} = \emptyset$. 
In this case the mapping $P_h$ coincides with the affine projection $\widetilde{\Pi}_h$. 

\item (Well-posedness) 
The fact that well-posedness for solutions with $U \in \mAhgu(\bcu)$ and $h\q \in \mHghq(\bchq)$ is guaranteed means that in this framework one cannot hope to impose anything extra in terms of boundary conditions. 
In particular, once $\Gamma_u$ and $\Gamma_{hq}$ are fixed as in \hypref{hyp:dom-dt} and once the values for $\bcu$ and $\bchq$ are prescribed, one cannot prescribe anything extra on $\u \cdot \nu |_{\Gamma_{hq}}$ or on $h\q |_{\Gamma_u}$ in a suitable sense. 
The freedom consists in the possibility to choose the decomposition of $\partial \Omega$ and the data imposed on one of the functions on each of the parts of the boundary. 
\item (Elliptic equation)
One can show that $U = \left(\u,\w,\sigma\right)^\top$ and $\left(\q,q_B\right)$ form a solution to the corrections step as discussed above if and only if~~$\u$ is a unique solution of an elliptic system of equations, cf.~\cite{Parisot19}. 
If the functions involved are sufficiently regular for the traces of the following terms to be well-defined, one can see that $\u$ satisfies on $\Gamma_{hq}$ 
\begin{equation*}
\frac{h^2}{2}\nabla B\cdot\u-\frac{h^3}{3}\diver\u 
 =
\frac{h^2}{2} \(\w^*+\frac{\sigma^*}{\sqrt{3}}\) + \dt \bchq
\qquad\textrm{on }\Gamma_{hq}.
\end{equation*}
Similarly, under suitable assumptions on the solutions equivalence to $ h \q $ being a solution to an elliptic equation (see~\cite{SainteMarie16} in a similar context) can be proved.  
Again under suitable regularity assumptions it follows that
$h\q$ satisfies on $\Gamma_{u}$ the following relation
\begin{align*}
\partial_{\nu}\left(h\q\right) +\frac{\partial_\nu B}{4+\abs{\nabla B}^2} & \left( 6 \q-\nabla B\cdot\nabla\left(h \q\right) \right)
\\
& =
\frac{h}{\dt} \left(
 \u^* \cdot \nu+ \frac{\partial_\nu B}{4+\abs{\nabla B}^2} \left( \w^*-\sqrt{3} \sigma^*- \u^* \cdot \nabla B\right)
  - \bcu
\right)
 \quad \text{ on }\;\; \Gamma_u,
\end{align*}
where $\partial_\nu f \coloneqq \nabla f \cdot \nu$. 
This shows that there is no freedom to impose additional conditions. 
\item (Alternative boundary conditions) Our study is based on the choice to define the solution of the time-discrete Green--Naghdi model as the projection of the solution to the shallow water step onto a set of admissible functions. 
On bounded domains alternative formulations of the problem might lead to other (well-posed) boundary conditions. 
\end{enumerate}
\end{rmk}

\section{Projection scheme for the fully discrete correction step}
\label{sec:fully-discr-bd}
In this section we consider the correction step for the fully discrete problem, i.e., additionally to the time discretization as introduced in~\secref{sec:time-discrete} we also discretize in space.
Assume that a numerical scheme  is given that approximates the advection step on a polygonal tesselation $\T$ of a spatial domain $\Omega \subset \mathbb{R}^d$.
Let $\dofh$, $\dofu$, $\dofw$ and $\dofsig$ denote the respective sets of degrees of freedom of the unknown functions  $h$, $\u$, $\w$ and $\sigma$. 
For collocated schemes one has that $\dofh=\dofw=\dofsig=\T$ and $\dofu = \left(\T\right)^d$, see~\cite{Bouchut04}. 
For staggered discretizations one still has that $\dofh=\dofw=\dofsig = \T$, but in general $\dofu \neq \left(\T\right)^d$, see~\cite{Herbin14}.
 Even more generally there are schemes for which the components of the horizontal velocity $\u$ do not use the same degrees of freedom, see~\cite{Harlow65}.
For high order numerical methods such as finite element methods or discontinuous Galerkin methods, for the description of an unknown function $a$ several degrees of freedom on each cell $k \in \T$ of the mesh are required so that $\card\left(\dofa\right)>\card\left(\T\right)$, cf.~\cite{DiPietro12}.
Schemes as mentioned lead to a non-negative water depth $h_\star \coloneqq \left(h_k\right)_{k\in\dofh} \geq 0$ and the velocity function $U_\star^*=\left(\left(\u_k^*\right)_{k\in\dofu}, \left(\w_k^*\right)_{k\in\dofw}, \left(\sigma_k^*\right)_{k\in\dofsig}\right)$
as approximate solution of the advection step~\eqref{eq:GN-dt-adv}. 
Note that in the advection step also the time step $\delta_t>0$ is fixed.  

Now we shall focus on the numerical approximation of the correction step. 
In~\secref{NumWholeSpace} we present a strategy to construct a numerical scheme satisfying a discrete projection property on the whole space without dry areas. Hence, there is no boundary. 
This represents a discrete counterpart of the situation in \lemref{lem:proj-Rn}.
Implementing boundary conditions is not straightforward for general fully discrete schemes. 
Thus, in~\secref{NumSimple} we choose one specific scheme to be considered from then on. 
Then, in~\secref{NumBounded} we focus on the numerical boundary conditions for bounded spatial domains.
As in the space-continuous situation in~\secref{sec:proj-dt-bd} we identify a class of boundary conditions that are compatible with a discrete projection property of the scheme.
In~\secref{NumBC_bound} we start by considering faces on the boundary of the computational domain.
Then in~\secref{NumBC_int} we focus on dry front faces.

\subsection{Whole space domain}\label{NumWholeSpace}

In the current section we present an approach for the whole space domain, which is adapted in the sequel to identify a class of boundary conditions for bounded domains. 

\subsubsection{General strategy}\label{NumOverview}
In this section we present a general approach to construct schemes with a discrete orthogonal projection property.  
This strategy is applicable in numerous situations besides the ones considered here.  
We present three steps that, by construction, lead to a numerical scheme which is a projection onto a certain discrete linear space. 
\begin{enumerate}[label=(\Roman*),leftmargin=0cm]\setlength{\itemindent}{1cm}
\item{\style Discrete scalar product:}
We require a discrete weighted scalar product.
For given positive $H_\star = \left(H_k\right)_{k\in\dofh}>0$, and each of the unknown functions $a \in \left\{\u,\w,\sigma\right\}$ we assume that a scalar product 
$\ll \cdot,\cdot\rr_{H_\star}^{\delta,a} \colon \R^{\card(\dofa)} \times \R^{\card(\dofa)} \to \R$ is specified.
Then, a discrete counterpart of the weighted Lebesgue space for one of the unknown functions $a$ is given by the weighted space of sequences
\begin{equation*}
\ell^2\left(\dofa;H_\star\right)
=\left\{\chi_\star=\left(\chi_k\right)_{k\in\dofa}\colon \norm{\chi_\star}_{H_\star}^{\delta,a}<\infty \right\}
\quad\text{with}\quad
\norm{\chi_\star}_{H_\star}^{\delta,a}\coloneqq\sqrt{\ll \chi_\star,\chi_\star\rr_{H_\star}^{\delta,a}}.
\end{equation*}
We denote the degrees of freedom for the full velocity vector $U = \left(\u,\w,\sigma\right)$ with slight abuse of notation by $\dofU \coloneqq \dofu \times \dofw \times \dofsig$.
Then, for the full vector of unknown functions the weighted discrete scalar product $\ll\cdot,\cdot\rr_{H_\star}^{\delta}\colon \R^{\card(\dofU)} \times \R^{\card(\dofU)}\to\R$ is given by
\begin{equation*}
\ll\begin{pmatrix} U_{1\star}\\U_{2\star}\\U_{3\star}\end{pmatrix},\begin{pmatrix}V_{1\star}\\V_{2\star}\\V_{3\star}\end{pmatrix}\rr_{H_\star}^{\delta} \coloneqq \ll U_{1\star},V_{1\star}\rr_{H_\star}^{\delta,\u}
+\ll U_{2\star},V_{2\star}\rr_{H_\star}^{\delta,\w}
+\ll U_{3\star},V_{3\star}\rr_{H_\star}^{\delta,\sigma}.
\end{equation*}
It equips the space $\ell^2\left(\dofU;H_\star\right) \coloneqq  \ell^2\left(\dofu;H_\star\right)\times\ell^2\left(\dofw;H_\star\right)\times\ell^2\left(\dofsig;H_\star\right)$ with a scalar product. 
Note that this shall be applied for the water depth function $h_\star$ given by the advection step, assuming for now that $h_k >0$, for any $k \in \dofh$. 
 Later in~\secref{NumBC_int} the interior dry front shall be treated as part of the boundary. 
\item{\style Admissible discrete functions:}
We consider a discrete version $\Ahd$ of the space of admissible functions $\Ahb$. 
To avoid a restriction of the presentation to a particular scheme, at this point we keep the definition of $\Ahd$ relatively vague. 
We only demand that it is a closed linear subspace of $\ell^2(
\dofU; h_{\star})$ that is consistent with the definition of $\Ahb$. 
Then we denote 
by $\Pi_{h_\star}^\delta[\Ahd]$ the $\ll \cdot, \cdot \rr_{h_\star}^\delta$-orthogonal projection mapping to $\Ahd$.
Motivated by \lemref{lem:proj-Rn} we define the approximate velocity vector for the fully discrete problem on the whole space as the projection to the discrete space of admissible function $\Ahd$, that is, by
\begin{equation*}\label{NumProj}
U_\star=\Pi_{h_\star}^\delta[\Ahd]\left(U_\star^*\right)
\qquad\textrm{ for any }\quad
U_\star^* \in \ell^2\left(\dofU;h_\star\right).
\end{equation*}
By this $U_\star$ is fully determined and we do not have any additional freedom. 
However, it is not clear how to compute it.  
In fact there are various numerical methods to determine linear projections, such as minimization schemes or methods based on a variational formulation. 
At this stage it is also clear that there exists a uniquely determined function $\Phi_\star=\left(\left(\phi_{1k}\right)_{k\in\dofu},\left(\phi_{2k}\right)_{k\in\dofw},\(\phi_{3k}\right)_{k\in\dofsig}\right)\in\left(\Ahd\right)^\perp$ such that
\begin{equation*}
U_\star=U_\star^*-\delta_t\Phi_\star,
\end{equation*}
where $\left(\Ahd\right)^\perp$ is the $\ll\cdot,\cdot\rr^{\delta}_{h_\star}$-orthogonal complement of $\Ahd$ in $\ell^2(\dofU;h_{\star})$.  

\item{\style Discrete hydrodynamic pressures:} 
It remains to identify certain discrete pressure functions with $\Phi_\star$ so that equation~\eqref{eq:GN-dt-proj-pre} is satisfied in a suitable sense. 
If one is interested only in $U_\star$, at least for first order schemes this need not be done in the computation but only theoretically. 
Let $\dof^\q$ and $\dof^{q_B}$ denote the respective degrees of freedom of $\q$ and $q_B$. 
If there is an invertible mapping $\Psi_{h_\star}^\delta \colon \R^{\card(\dof^\q)} \times \R^{\card(\dof^{q_B})} \to \left(\Ahd\right)^\perp$, which is a consistent discrete counterpart of the mapping $\Psi_h$ defined in~\eqref{def:psi-inv}, then the discrete hydrodynamic pressure functions can be recovered by
\begin{equation*}
\left(\mat{c}{\q_k\\q_{Bk}}\right)=\left(\Psi_{h_\star}^\delta\right)^{-1}\left(\Phi_\star\right)|_{k}.
\end{equation*} 
Note that the discretization of the hydrodynamic pressure functions (determined by $\dof^\q$ and $\dof^{q_B}$) is less relevant as long as the mapping $\Psi_{h_\star}^\delta$ is invertible. 
If one chooses $\left(\Psi_{h_\star}^\delta\right)^{-1}|_{k} \left(\Phi_\star\right)\coloneqq \Psi_h^{-1} \left(\Phi_k\right)$ with $\Psi_h^{-1}$ as defined in~\eqref{def:psi-inv} then it follows that $\dof^{q_B}=\dofw$ and if $\dofw = \dofsig$, then also $\dof^\q=\dofsig  = \dofw$. 
However, in general it is not necessary to choose the same degrees of freedom for the discrete hydrodynamic pressures as for the velocity functions, as is the case for staggered grids.  
\end{enumerate}

\subsubsection{Application to a simple case}\label{NumSimple}

To showcase the benefits of the previously outlined strategy we apply it in a simple situation. 
We assume that to each polygonal cell $k\in\T$ in a given regular tesselation $\T$ of the spatial domain $\Omega$, a single degree of freedom of each unknown is assigned, i.e., $\dofh=\dofw=\dofsig=\T$ and $\dofu = \T^d$. 
For example, the degree of freedom may represent the mean value of a continuous function on the cell $k$, as for finite volume methods of lowest order, cf.~\cite{Bouchut04}.
\begin{enumerate}[label=(\Roman*),leftmargin=0cm]\setlength{\itemindent}{1cm}

\item 
For this choice of degrees of freedom several schemes for the shallow water step are entropy-satisfying in the sense that a discrete inequality of the form
\begin{align*}
 \sum_{k\in\T} h_k^{n*}\abs{ \u_k^{n*}}^2 
\le
\sum_{k\in\T}h_k^n \abs{\u_k^n}^2,
 \end{align*}
is satisfied in the $n$-th time step. 
Analogous weighted estimates can be obtained for the discrete solutions $\w^{n*}_{\star}$ and $\sigma^{n*}_{\star}$ of the advection equations~\eqref{eq:GN-dt-adv}. 
In this way we obtain a discrete version of the balance law \eqref{Entropy_adv} by use of suitable schemes. 
Note that this determines the scalar product.
More precisely, for any $a\in\l\{\u,\w,\sigma\r\}$ we set
\begin{equation*}
\ll \psi_\star,\chi_\star\rr_{h_\star}^{\delta,a}\coloneqq\sum_{k\in\W} \psi_k\cdot \chi_k\, h_k\, \m_k, 
\end{equation*}
where $\m_k$ is the $d$-dimensional volume of the cell $k \in \T$.
For vector-valued arguments the dot product denotes the canonical scalar product of vectors in $\R^d$ and otherwise the multiplication of scalars. 
Here the subset $\W\subseteq\T$ is the domain where we apply the projection.
 In fact, the domain has to be restricted to the wetted areas $\W\coloneqq \l\{k\in\T\colon h_k>0\r\}$, since the scalar product is not defined on dry areas where $h_k = 0$.
This means that a particular boundary, the so-called \textit{dry front}, has to be considered. 
We investigate this situation in~\secref{NumBC_int}. 
Here for simplicity we assume that the water depth is positive on the whole domain and thus we have $\W=\T$.  
Note that we can identify $\ell^2(\dofU;h_\star)$ with $\ell^2(\T;h_\star)^{d+2}$. 
 
\item We define the discrete admissible set by
\begin{equation}\label{NumAdmissibleSet}
\Ahd \coloneqq \left\{
U_\star=\left(\mat{c}{\u_\star\\ \w_\star\\\sigma_\star}\right)\in \Lhd^{d+2} \colon 
\quad
\mat{l}{
\w_k = \u_k \cdot \nablad_k B_\star - \frac{h_k}{2} \nablad_k\cdot \u_\star\ ,
\\
\sigma_k = - \frac{h_k}{2 \sqrt{3}} \nablad_k\cdot \u_\star \; \quad \text{ for all } k \in \T
}\right\}
\end{equation}
with the centered approximation $\nabla^{\delta}_{\star}$ and $\nabla^{\delta}_\star \cdot$ of the gradient and divergence operator, respectively, defined by
\begin{equation*}
\nablad_k\phi_\star \coloneqq\frac{1}{\m_k}\sum_{\Fd_k}\frac{\phi_k+\phi_\kf}{2}\n_k^\kf\m_f
\qquad\textrm{and}\qquad
\nablad_k\cdot\phi_\star \coloneqq\frac{1}{\m_k}\sum_{\Fd_k}\frac{\phi_k+\phi_\kf}{2}\cdot\n_k^\kf\m_f. 
\end{equation*}
Here $\Fd_k$ is the set of faces of the cell $k$, $\m_f$ is the $(d-1)$-dimensional volume of a face $f \in \Fd_k$ (with the convention $\m_f=1$ for $d=1$),
$\kf\in\T$ is the cell adjacent to $k$ sharing the face $f$ and $\n_k^\kf$ is the outward unit normal of a cell $k$ on the face $f$. 
We also introduce the discrete bathymetry $B_\star=\left(B_k\right)_{k\in\T}$. 
Then we may set $U_{\star} \coloneqq \Pi_{h_{\star}}^{\delta}[\Ahd](U^*_{\star})$ and there is a discrete function $\Phi_{\star} \in \left(\Ahd\right)^\perp$ such that
\begin{align}\label{NumDecomposition}
U_{\star} =  U^*_{\star} - \dt \Phi_{\star}. 
\end{align}
To efficiently compute approximate solutions with this scheme we identify  $\Phi_{\star} \in \left(\Ahd\right)^\perp$. 
A computation similar to the one in~\eqref{eq:proj-Rn} leads to 
\begin{equation}\label{NumOrtho}
0
=\ll V_\star,\Phi_\star\rr_{h_\star}^\delta
=
\sum_{k\in\T}\left(h_k\left(\phi_{1k}+\phi_{2k}\nablad_kB_\star\right)+\nablad_k\left(\frac{h_\star^2}2\left(\phi_{2\star}+\frac{\phi_{3\star}}{\sqrt{3}}\right)\right)\right) \cdot V_{1k} \, \m_k,
\end{equation}
for any $V_\star=\left(V_{1\star},V_{2\star},V_{3\star}\right)^\top\in\Ahd$ and any $\Phi_\star=\left(\phi_{1\star},\phi_{2\star},\phi_{3\star}\right)^\top\in\left(\Ahd\right)^\perp$.
We have used the fact that the discrete gradient is the dual operator of the discrete divergence with respect to the (unweighted) scalar product, which means that
\begin{equation*}
\ll \nablad_\star\cdot \psi_\star,\chi_\star\rr_{1}^\delta=-\ll \psi_\star,\nablad_\star \chi_\star\rr_{1}^\delta
\quad\textrm{for any}\ \psi_\star\in\Lhd^d
\textrm{ and }\chi_\star\in\Lhd.
\end{equation*}
Here $1$ denotes the constant function with value $1$.  
From~\eqref{NumOrtho} it follows that $\phi_{1\star}$ is determined by $\phi_{2\star}$ and $\phi_{3\star}$ via
\begin{align}\label{NumAdmissibleSetOrtho}
h_k \phi_{1k} =
 - \left(\nablad_k\left(\frac{h_\star^2}2\left(\phi_{2\star}+\frac{\phi_{3\star}}{\sqrt{3}}\right)\right)  +  \phi_{2k}\nablad_k B_\star\right).
\end{align}
Together, the two constraints in~\eqref{NumAdmissibleSet}, \eqref{NumDecomposition} and~\eqref{NumAdmissibleSetOrtho} form to a linear system of equations of size $(5+d)\card\left(\T\right)$ with unknowns $U_\star$ and $\Phi_\star$. 
By substitution the system can be reduced to a system of size $d\card\left(\T\right)$ for the unknown $\u_\star$ only. 
This leads to the scheme ($GN^\delta$) proposed in~\cite{Parisot19} where the correction step is computed by solving a system for the velocity.
Alternatively, one may solve the system for $\Phi_\star$, cf.~\cite{SainteMarie16}. 

\item 
Finally, a reconstruction of the hydrodynamic pressure functions is obtained by setting
\begin{equation*}
\left(\mat{c}{\q_k\\q_{Bk}}\right)
\coloneqq
\Psi_h^{-1} \left(\Phi_k\right) 
=
-h_k\left(\mat{c}{\frac{1}{2} \left(\phi_{2k}+ \frac{\phi_{3k}}{\sqrt{3}}\right)
\\\ds
\phi_{2k}}\right).
\end{equation*}
By the projection property we obtain a discrete version of \eqref{Entropy_proj}. 
\end{enumerate}
Then, combining the estimates for both steps the full scheme is entropy-satisfying. 
\subsection{Bounded spatial domain}\label{NumBounded}

In the following let us consider the case of a bounded domain $\Omega$.
As for the time-discrete case in~\secref{sec:proj-dt-bd} we modify the space of admissible functions such that the orthogonality property in~\eqref{NumOrtho} is preserved for homogeneous boundary conditions. 
More specifically, we aim for a discrete space $\Ahdg$ including boundary conditions and an invertible mapping $\Psi_{h_\star}^{\delta}$ that maps to a subset of $\ell^2(
\dofU; h_\star)^{d+2}$ such that
\begin{align}\label{eq:discr-orth}
\ll V_\star,\Phi_{\star}\rr_{h_\star}^\delta 
= 0 
\quad\text{for all}\;\; V_\star \in \Ahdg\quad\text{and all}\;\; \Phi_{\star} \in \Img(\Psi_{h_\star}^{\delta}). 
\end{align}
For the sake of readability, we illustrate the strategy using the discretization presented in~\secref{NumSimple}.
An analogous procedure can be applied for more complex schemes but suitable boundary conditions are highly dependent on the choice of discretization.
\medskip

From~\eqref{NumOrtho} it follows by a straight-forward computation that for $U_\star = \left(\u_{\star},\w_{\star},\sigma_{\star}\right)^\top \in \Ahd$ and $\Phi_\star = \Psi_{h_{\star}}^\delta\left(\q_\star,q_{B\star}\right)$ one has that
\begin{equation}\label{eq:discr-proj} 
\ll U_\star,\Phi_\star\rr_{h_\star}^\delta
=-\sum_{k\in\W}\sum_{f\in\Fd_k}\frac{h_\kf\q_\kf\u_k+h_k\q_k\u_\kf}2\cdot\n_k^\kf\m_f.
\end{equation}
Thanks to the antisymmetry of the normal $\n_k^\kf=-\n_\kf^k$, the terms on interior faces vanish and only the ones on the boundary of the domain have to be taken into account. 

From now on let $\partial\W$ denote the subset of faces in $\T$ that lie on the boundary of the domain $\W$. 
Here the cells $\ki = \ki(f)\in\W$ and $\kg = \kg(f)\not\in\W$ are adjacent to a face $f\in\partial\W$. 
The cell $\kg$ is a so-called {\it ghost cell} that can be defined for a boundary face $f\in\partial\W$ by extension of the tesselation. 
We denote by $\mathbb{G}$ the set of all such ghost cells.

Since the terms on interior faces in~\eqref{eq:discr-proj} vanish, the orthogonality in~\eqref{eq:discr-orth} holds if and only if the following condition on boundary faces holds
\begin{equation}\label{NumProjCond}
\left(h_\kg\q_\kg\u_\ki+h_\ki\q_\ki\u_\kg\right)\cdot\n_\ki^\kg = 0
\qquad\text{for any } f\in\partial\W. 
\end{equation}
Once the values $h_\kg\q_\kg$ and $\u_\kg\cdot\n_\ki^\kg$ are available, the system of equations is closed and we are in the position to apply the projection scheme presented in~\secref{NumSimple} on $\W$. 
Note that the properties of the ghost cells do not affect the scheme. 
In the following we specify -- for several different types of boundary conditions -- the values of the unknown functions on the ghost cells depending on the interior cell values such that~\eqref{NumProjCond} is satisfied. 
\medskip

Similarly as in \hypref{hyp:dom-dt} we divide the set of boundary faces  into subsets corresponding to the type of boundary condition imposed.
First, we define the faces at the (interior) dry front 
forming the set $\Gamma_h \coloneqq\l\{f\in\partial\W\colon h_{\kg}=0\r\}=\partial\W\backslash\partial\T$. 
Then, the remaining boundary faces $\partial\W\cap\partial\T$ are decomposed into two sets. 
\begin{hyp}[Decomposition of the boundary faces $\partial\W\cap\partial\T$]\label{hyp:NumBound}
Assume that there exist two sets of faces $\Gammad_u, \Gammad_{hq} \subset \partial\W\cap\partial\T$ decomposing the set of boundary faces $\partial\W\cap\partial\T$, i.e., we have that $\partial\W\cap\partial\T =\Gammad_u\cup\Gammad_{hq}$ and $\Gammad_{u}\cap\Gammad_{hq}=\emptyset$.
\end{hyp}
Similarly as in the time-discrete framework~\eqref{eq:bc-dt-inhom}, we want to impose  boundary conditions on the normal velocity $\u \cdot \nu$ on the faces in $\Gammad_u$ and on $h\q$ on faces in $\Gammad_{hq}$. 
More specifically, for given discrete real-valued boundary data
$\bcu_\star=\left(\bcu_f\right)_{f\in\Gammad_u}$ and $\bchq_\star=\left(\bchq_f\right)_{f\in\Gammad_{hq}}$,
 we set
\begin{equation}\label{NumBcInhom}
\mat{l@{\qquad}r@{~}c@{~}l@{\qquad}l}{&\ds
\left(\alpha_f\u_\ki+\left(1-\alpha_f\right)\u_\kg\right)\cdot \n_\ki^\kg
&=&\ds
\bcu_f
&\textrm{for any }f\in\Gammad_u,
\\&\ds
\alpha_f h_\ki\q_\ki+\left(1-\alpha_f\right) h_\kg\q_\kg
&=&\ds
\bchq_f
&\textrm{for any }f\in\Gammad_{hq},
}
\end{equation}
for face weights $\alpha_f \in [0,1)$ to be chosen.
The weights $\alpha_f$ determine the location in which the given values $\bcu_f$ or $\bchq_f$ are imposed.  
For example, for $\alpha_f=0.5$ the given value can be considered to be prescribed at the face, while for $\alpha_f=0$ the given value is imposed directly on the ghost cell. 
We shall see that this parameter allows us to deal in a similar manner with both boundary conditions on the boundary of the computational domain $\partial\W\cap\partial\T$ 
and with conditions on interior faces $\partial\W\setminus\partial\T$ such as the dry front.

\subsubsection{General treatment of boundary conditions}\label{NumBC_bound}
Let us start considering the faces on the boundary of the computational domain, divided into $\Gammad_u$ and $\Gammad_{hq}$. 
Those are the faces on which we impose given data. 
Analogously to the time-discrete case in \theoref{thm:proj-dt-bd-inhom}, we work with discrete reference functions to deal with inhomogeneous boundary conditions. 
Denoting by $\G$ the set of ghost cells we require that the reference functions $\ur_\star=\left(\ur_k\right)_{k\in\W\cup\G}$ and $\qr_\star=\left(\qr_k\right)_{k\in\W\cup\G}$ are discrete functions satisfying the boundary conditions~\eqref{NumBcInhom}. 
Computing such discrete reference functions may be costly.
 Hence, we aim for a class of discrete reference functions that need not be computed explicitly. 
 As in the time-discrete case we want the differences $\u_\star-\ur_\star$ and $\q_\star-\qr_\star$ to 
satisfy the homogeneous condition in~\eqref{NumBcInhom} and the projection condition in~\eqref{NumProjCond}. 
This leads to 
\begin{alignat*}{1}
\u_\kg \cdot \n_\ki^\kg
&=
\begin{cases} 
\frac{\left(1-\alpha_f\right)\ur_\kg-\alpha_f\left(\u_\ki-\ur_\ki\right)}{1-\alpha_f}\cdot \n_\ki^\kg
\quad\quad   
&\textrm{for any }f\in\Gammad_u,\\
\frac{\left(1-\alpha_f\right)\ur_\kg+\alpha_f\left(\u_\ki -\ur_\ki \right)}{1-\alpha_f}\cdot \n_\ki^\kg &\textrm{for any }f\in\Gammad_{hq},
\end{cases} \\
h_\kg\q_\kg
&=
\begin{cases}
\frac{\left(1-\alpha_f\right)h_\kg\qr_\kg+\alpha_fh_\ki\left(\q_\ki-\qr_\ki\right)}{1-\alpha_f}
&\;\textrm{  for any }f\in\Gammad_u,\\
\frac{\left(1-\alpha_f\right)h_\kg\qr_\kg-\alpha_fh_\ki\left(\q_\ki-\qr_\ki\right)}{1-\alpha_f}
&\;\textrm{  for any }f\in\Gammad_{hq}.
\end{cases}
\end{alignat*}
Since the discrete reference functions satisfy~\eqref{NumBcInhom}, the terms   $\ur_\kg \cdot \n_\ki^\kg$ on $\Gammad_u$ and $h_\kg\qr_\kg$ on $\Gammad_{hq}$ can be replaced. 
Furthermore, we choose 
$\ur_\kg \cdot \n_\ki^\kg$ on $\Gammad_{hq}$, and $h_\kg\qr_\kg$ on $\Gammad_{u}$, respectively, such that the remaining terms that depend on the reference functions disappear as well. 
Thus, the boundary conditions can be formulated independently of the reference functions and no explicit knowledge of the discrete reference function is required for the computation. 

Overall, this is achieved if we set for the discrete reference functions
\begin{equation*}%\label{NumBcRef}
  \begin{alignedat}{2}
\left(\alpha_f\ur_\ki+\left(1-\alpha_f\right)\ur_\kg\right)\cdot \n_\ki^\kg
&= 
\bcu_f \qquad 
&&\textrm{ for any }f\in\Gammad_u,\\ 
\left(\alpha_f\ur_\ki-\left(1-\alpha_f\right)\ur_\kg\right)\cdot \n_\ki^\kg
&=
0
&&\textrm{ for any }f\in\Gammad_{hq},\\ 
\alpha_f h_\ki\qr_\ki-\left(1-\alpha_f\right)h_\kg\qr_\kg
&= 
0
&&\textrm{ for any }f\in\Gammad_u
,
\\ 
%\textrm{and}\qquad
\alpha_f h_\ki\qr_\ki+\left(1-\alpha_f\right)h_\kg\qr_\kg
&= 
\bchq_f
&&\textrm{ for any }f\in\Gammad_{hq}. 
  \end{alignedat}
\end{equation*}
Existence of such reference functions is straightforward, since the problem is finite-dimensional. 

\subsubsection{Dry front condition}\label{NumBC_int}

Finally, we consider the case of the boundary of the projection domain $\W$ located in the interior of the computational domain $\T$. 
This occurs in the presence of dry areas $\l\{k\in\T\colon h_k=0\r\}$, where the projection can not be defined. 
Note that in dry areas the solution of the shallow water model and hence of the advection step is not fully determined~\cite{Lannes18}. Still, this case can be handled by several numerical schemes, cf.~\cite{BNL.2011, Bouchut04}. 

By definition of the wetted domain $\W$ for any face $f\in\partial\W\setminus\partial\T$ the water depth vanishes on the adjacent ghost cell by $h_\kg=0$. 
 Thus, it is natural to impose on the ghost cell that $h_\kg\q_\kg=0$, which is included in~\eqref{NumBcInhom} by setting $\alpha_f=0$ and $\bchq_f=0$ for any $f\in\Gammad_h$. 
The remaining ghost cell values are then determined by~\eqref{NumProjCond} as $\u_\kg \cdot \n_\ki^\kg = 0$. 
Note that this represents a boundary condition without exchange of energy.  

\paragraph*{Summary:} Altogether, we propose the following values for the ghost cells
\begin{equation}\label{NumBc}
\mat{l@{\qquad}r@{~}c@{~}l}{&\ds
\u_\kg \cdot \n_\ki^\kg
&=&
\ds\l\{\mat{l@{\qquad}l}{\ds
2\bcu_f-\u_\ki\cdot \n_\ki^\kg&\textrm{ for any }f\in\Gammad_u,\\\ds
\u_\ki\cdot \n_\ki^\kg&\textrm{ for any }f\in\Gammad_{hq},\\\ds
0&\textrm{ for any }f\in\Gammad_{h},
}\r.
\\&\ds 
\textrm{and}\qquad h_\kg\q_\kg
&=&
\ds\l\{\mat{l@{\qquad}l}{\ds
h_\ki\q_\ki&\textrm{ for any }f\in\Gammad_u,\\\ds
2\bchq_f-h_\ki\q_\ki&\textrm{ for any }f\in\Gammad_{hq},\\\ds
0&\textrm{ for any }f\in\Gammad_{h}.
}\r.
}
\end{equation}
This results from setting $\alpha_f=\frac12$ for any $f\in\Gammad_u\cap\Gammad_{hq}$. 
It is motivated by the conservation of mass when dealing with wall boundary conditions for the full model, see~\secref{BC_wall}.

\section{Numerical strategy for the full Green--Naghdi system}\label{sec:full-num}

In the preceding section the numerical strategy of the correction step is described. 
While the correction step is the  main focus of this work we shall also present a numerical scheme for the full Green--Naghdi model~\eqref{eq:GN} in the following.  
For certain typical situations in 1D we present numerical evidence to highlight the strength of the framework especially with respect to non-standard boundary conditions.
As in~\secref{sec:splitting} we use a time splitting that can be interpreted as an ImEx (implicit-explicit) scheme.

\begin{enumerate}[label=(\Roman*),leftmargin=0cm]\setlength{\itemindent}{1cm}
\item {\style The advection step:} \label{itm:adv-step-num}
The first step~\eqref{eq:GN-dt-adv} consists of the shallow water equations and advection equations for the scalar functions $\w$ and $\sigma$.
As in~\cite{Bouchut04} we apply a classical finite volume scheme for the coupled system of equations for water depth $h$ and horizontal velocity $\u$ and obtain for the $(n+1)$st time step the explicit scheme
\begin{equation*}
\mat{r@{~}c@{~}l}{\ds
h^{n*}_k
&=&\ds
h^n_k
- \frac{\dtn}{\m_k}\sum_{f\in\Fd_k}\F_f^h\cdot\n_k^\kf\m_f,
\\
h^{n*} \u^{n*}
&=&\ds
h^n  \u^n 
- \frac{\dtn}{\m_k}\sum_{f\in\Fd_k}\F_f^{hu}\n_k^\kf\m_f+\Sd_k^n. 
}
\end{equation*}
Here $\left(\F_f^h\cdot\n_k^\kf,\F_f^{hu}\n_k^\kf\right)=\F\left(\left(h^n_k,h^n_k\u^n_k\right)^\top,\left(h^n_\kf,h^n_\kf\u^n_\kf\right)^\top\right)$ is computed with an approximate Godunov method. 
Godunov schemes are stable assuming the classical CFL condition on the time step
\begin{equation}\label{CFL}
\delta_t^n=\frac{\Ccfl \delta_k}{\lambda^n}
\qquad\textrm{with}\qquad
\delta_k \coloneqq \frac{\m_k}{\sum_{f\in\Fd_k}\m_f}
\ ,
\end{equation}
for given CFL parameter $0<\Ccfl\le1$ and $\lambda^n\coloneqq \lambda\left(h^n,h^n\u^n\right)$ denotes an approximation of the largest magnitude of the shallow water eigenvalues.
The latter depends on the choice of the approximate Godunov solver, see~\cite{Bouchut04}. 
For the simulations presented below the HLL solver is used. 
We also tested alternative numerical fluxes (Rusanov, Roe, Lax--Friedrichs, Lax--Wendroff, kinetic) without significant differences in the results. 
At each discrete time $t^n$ the time step $\delta_t^n$ is computed to satisfy the CFL condition \eqref{CFL} with $\Ccfl=1$. 

Furthermore, $\Sd_k^n$ is a discretization of the source term including the bathymetry and we choose the hydrostatic reconstruction as in~\cite{Audusse04}. 
The discrete initial functions $h^0_\star$ and $\u^0_\star$ are obtained as approximations of the initial functions $h^0\left(x\right)$ and $\u^0\left(x\right)$ of the Green--Naghdi model. 
Several classical schemes for the shallow water equations use ghost cells for which the values $h^n_\kg$ and $\u^n_\kg\cdot\n^{\kg}_\ki$ are required.
In the case of the shallow water equations with subcritical boundary conditions only one datum is given. 
The second one is recovered based on the fact that the Riemann invariant leaving the domain is constant. 
In addition, the tangential component $\u^n_\kg \cdot \t_\ki^\kg$, with $\t_\ki^{\kg}$ a tangential unit vector to the face, of the velocity is needed (at least) when the mass flux is incoming, i.e., if $\F_f^h\cdot\n_{\ki}^{\kg}<0$. 
Note that the mass flux can be computed first by using the water depth and normal velocity in the vicinity of the face. 
For more details we refer to~\cite[\S V.2.2]{Godlewski96}. 
Note that in the case of the Green--Naghdi equations, there is no object corresponding to the Riemann invariant and hence a~priori such a strategy is not available. 

As second part of the advection step the vertical velocity as well as the tangent component of the horizontal velocity are computed using an upwind scheme with respect to the mass flux.
More precisely, we set
\begin{equation*}
\mat{r@{~}c@{~}l}{\ds
h^{n*}_k
\begin{pmatrix}\w^{n*}_k \\
\sigma^{n*}_k \end{pmatrix}
&=&\ds
h^{n}_k
\begin{pmatrix}
\w^{n}_k\\
\sigma^{n}_k \end{pmatrix}
- \frac{\dtn}{\m_k}\sum_{f\in\Fd_k}\left(
\begin{pmatrix}\w^{n}_k \\
\sigma^{n}_k \end{pmatrix}
\left(\F_f^h\cdot\n_k^\kf\right)_+-
\begin{pmatrix} \w^{n}_\kf \\
\sigma^{n}_\kf \end{pmatrix}
\left(\F_f^h\cdot\n_k^\kf\right)_-\right)\m_f,
}
\end{equation*}
with the positive and negative parts of a scalar $a$ defined by $(a)_\pm=\frac{1}{2}\left(\abs{a}\pm a\right)$. 
The discrete initial data $\w^0_\star$ and $\sigma^0_\star$ can be obtained from the discrete initial data $h^0_\star$ and $\u^0_\star$ and the ghost cell values $\u^0_\kg\cdot\n_\ki^\kg$ using the constraints of the discrete admissible set of function $\Ahd$. 
Again, for the tangential component of the horizontal velocity the values in the ghost cells $\w^n_\kg$ and $\sigma^n_\kg$ are required if the flow is incoming, i.e., if $\F_f^h\cdot\n_\ki^\kg<0$. 
However, assuming that the constraints in $\Ahd$ are satisfied in the ghost cells, only one of the two values $\w^n_\kg$ and $\sigma^n_\kg$ has to be imposed. 
Then, the remaining one is determined  by the value of the first and the gradient of the bathymetry. 
For the simple scheme present in~\secref{NumSimple} one can check that the ghost cell values are given by
\begin{equation*}
\sigma_\kg^n= \frac{1}{\sqrt{3}} \left(\w^n_\kg -\u^n_\kg \cdot \nablad_\kg B_\star \right).
\end{equation*}
\item {\style The correction step:} \label{itm:cor-step-num}
In the
correction step of the $(n+1)$st time step for the boundary condition we require the normal velocity $\u^{n+1}_\kg\cdot\n_\ki^\kg$ and the mean hydrodynamic pressure $h^{n+1}_\kg\q^{n+1}_\kg$ for the respective boundary faces, cf.~\secref{NumBounded}.
\end{enumerate}

Overall the scheme requires the ghost cell values
\begin{align*}
&(\mathrm{I}) \qquad \begin{cases}
h^n_\kg
\;\; \textrm{and}\;\; 
\u^n_\kg\cdot\n_\ki^\kg,
&
\\
\u^n_\kg\cdot\t_\ki^\kg
\;\; \textrm{and}\;\; 
\w_\kg^n \qquad \qquad 
&\text{if} \quad
\F_f^h\cdot\n_\ki^\kg<0,
\end{cases}
\\
&(\mathrm{II}) \qquad 
\u^{n+1}_\kg\cdot\n_\ki^\kg
\;\;\textrm{and}\;\;
h^{n+1}_\kg\q^{n+1}_\kg.
\end{align*}
However, thanks to the projection conditions~\eqref{NumProjCond}, some of them can be deduced from the others.
In the remaining part of this section let us describe some strategies suitable in practice. 
Note that the following list is merely a starting point for further investigation and meant to demonstrate the potential of our approach. 
 \medskip

Let us remark that the widely used choice of periodic boundary conditions is  included in the framework presented here. 
Choosing the ghost cell values as periodic extension, in~\eqref{eq:discr-proj} the faces in the periodic boundary can be treated in the same way as interior faces. Then, only the boundary faces in the remaining part of the boundary have to satisfy~\eqref{NumProjCond}.
\medskip

In the following, we propose strategies to fix boundary conditions for numerical schemes for the Green--Naghdi equations.
Note that there is no theoretical result available specifying the number of boundary values required.  
Even in the simpler case of the linear KdV equations such an analysis is already rather involved, cf.~\cite{A.2012}. 
Here we choose to impose $d+2$ values at the boundary if the flow is incoming and 2 values otherwise.
Despite the lack of theoretical justification this number seems to be consistent with the analysis of the steady state regime.
The strategies we present heavily rely on the choice of the prescribed unknowns. 
In~\secref{BC_H_U} the water depth and the normal velocity are prescribed, whereas in~\secref{BC_M_Q} the flow discharge and the hydrodynamic pressure are prescribed.
Eventually, in~\secref{BC_dry} we comment on the case of non-flat bathymetry and dry front conditions.

For all simulations the gravitational acceleration $g$ is set to $9.81$. 
The 1D computational domain is chosen as $\Omega=(0,1)$ and we use a uniform grid with spatial grid size $m_k=\delta_x$.
There are only two boundary faces and we denote them by $f=L$ for the left one at $x=0$ and $f=R$ for the right one at $x=1$.

\subsection{Fixed water depth and velocity boundary condition}\label{BC_H_U}

In this section we consider the case of prescribed water depth and normal velocity at the boundary.
More specifically, for given discrete real-valued boundary data $\bch_f^n>0$ and $\bcu_f^n$ we set
\begin{subequations}\label{eq:bc-hu}
\begin{equation}\label{eq:bc_h_u}
h_\kg^n = 2\bch_f^n-h_\ki^n
\qquad\textrm{and}\qquad
\u_\kg^n\cdot \n_\ki^\kg = 2\bcu_f^n-\u_\ki^n\cdot \n_\ki^\kg.
\end{equation}
The condition on the normal velocity results from setting $\alpha_f=0.5$ in~\eqref{NumBcInhom}. 
This choice is motivated by the conservation of mass when considering the wall boundary condition, see~\secref{BC_wall}.
For the faces with incoming flow for given discrete real-valued boundary data $\bcv_f^n$ and $\bcw_f^n$, we additionally set
\begin{equation}\label{BC_upwind}
\u_\kg^n\cdot \t_\ki^\kg
=
2\bcv_f^n-\u_\ki^n\cdot \t_\ki^\kg
\qquad\textrm{and}\qquad
\w_\kg^n
=
2\bcw_f^n-\w_\ki^n
\qquad\textrm{where}\quad
\F_f^h\cdot\n_\ki^\kg<0.
\end{equation}
The ghost cell values in~\eqref{eq:bc_h_u}, \eqref{BC_upwind} are exactly what is required for the advection step, see~\secref{sec:full-num}~\ref{itm:adv-step-num}. 
In the correction step \ref{itm:cor-step-num} the normal velocity is imposed and hence the faces in question are part of $\Gammad_u$. 
Then, the boundary condition on the hydrodynamic pressure is determined by~\eqref{NumBc} on $\Gammad_u$ and we set 
\begin{equation}\label{eq:bc-h-u-cor}
\u_\kg^{n+1}\cdot \n_\ki^\kg
=
2\bcu_f^{n+1}-\u_\ki^{n+1}\cdot \n_\ki^\kg
\qquad\textrm{and}\qquad
h_\kg^{n+1}\q_\kg^{n+1}
=h_\ki^{n+1}\q_\ki^{n+1}.
\end{equation}
\end{subequations}

\subsubsection*{Solitary waves}\label{BC_H_U_evidence}\def\qsol{\q_{\sol}}

We illustrate the efficiency of our strategy on solitary wave solutions of the Green--Naghdi equations in 1D. 
The advantage of this example is that an analytical solution is available and it can easily be compared to the numerical solution.
More precisely, for a given far field water level $H>0$, amplitude $A>0$ and initial position $X\in\R$ of the wave the solitary wave is given by
\begin{subequations}\label{soliton}
\begin{equation}\label{soliton1}
\mat{rr@{~}c@{~}l}{&\ds
\hsol\left(t,x\right)
&=&\ds
H\left(1+A\sech^2\left(\chi\left(t,x\right)\right)\right),
\\
%\textrm{and}
&\ds
\usol\left(t,x\right)
&=&\ds
\left(1-\frac{H}{\hsol\left(t,x\right)}\right)\sqrt{\left(1+A\right)gH},
}
\end{equation}
with 
${\chi\left(t,x\right)
=
\sqrt{\frac{3A}{4\left(1+A\right)}}
\frac{1}{H} \left(x-X-\sqrt{\left(1+A\right)gH}t\right)}$.
Using the constraints~\eqref{eq:GNconstraints} and the equations~\eqref{eq:GN-hw}, \eqref{eq:GN-hs} of the Green--Naghdi equations, the remaining velocity functions and the hydrodynamic pressure functions are determined by
\begin{equation}\label{soliton2}
\mat{rr@{~}c@{~}l}{&\ds
\wsol\left(t,x\right)
&=&\ds
\sqrt{\frac{3g}{4}}\frac{\left(AH\right)^{3/2}\sech^2\left(\chi\left(t,x\right)\right)\tanh\left(\chi\left(t,x\right)\right)}{\hsol\left(t,x\right)},
\\&\ds
\ssol\left(t,x\right)
&=&\ds 
\frac{1}{\sqrt{3}}\wsol\left(t,x\right),
\\
&\ds
\qsol\left(t,x\right)
&=&\ds
g\left(\frac{3+A}2H-\hsol\right)
-\frac{1}{2}\left(\left(\usol-\sqrt{(1+A)gH}\right)^2+\wsol^2+\ssol^2 \right),
\\
%\textrm{and}
&\ds
\qBsol\left(t,x\right)
&=&\ds
\frac32\qsol\left(t,x\right).
}\end{equation}
\end{subequations}

\paragraph*{Numerical evidence (A).}
We consider the solitary wave~\eqref{soliton} for the parameters $A=0.5$ and $X=-0.5$ and compute the numerical solution starting from the corresponding initial data. 
We compare the solutions for $t \leq \sqrt{\frac{1}{(1+A)gH}}$, i.e., until the wave reaches the center of the computational domain.  
We investigate two cases: for $H=10^{-1}$ the solitary wave is a faster, rather \textit{spread wave}, and for $H=10^{-2}$ it is a slower, more \textit{stiff wave}. 
The initial functions are $h^0_k\coloneqq \hsol \left(0,\left(k-0.5\right)\delta_x\right)>0$ and $\u^0_k\coloneqq\usol\left(0,\left(k-0.5\right)\delta_x\right)$, which means that initially the water surface is almost flat in the computational domain.
The initial vertical velocities $\w^0_k$ and $\sigma^0_k$ are computed using the discrete constraints~\eqref{NumAdmissibleSet} and hence the vector of initial velocities is admissible in the sense of $\mathbb{A}^{\delta}_{h_\star^0}$.
For this set of simulations we prescribe the water depth and the velocity on both boundary faces $L$ and $R$, see~\eqref{eq:bc-hu}. 
To be precise, we set $\bch_L^{n}\coloneqq \hsol\left(t^{n},0\right)$ and $\bcu_L^{n}\coloneqq \usol\left(t^{n},0\right)$ on the left boundary face and $\bch_R^{n}\coloneqq \hsol\left(t^{n},1\right)$ and $\bcu_R^{n}\coloneqq\usol\left(t^{n},1\right)$ on the right boundary face.
In addition, on the left the flow is incoming and we set $\bcw_L^{n}\coloneqq\wsol\left(t^{n},0\right)$.
The numerical solutions are computed for several spatial mesh sizes $\delta_x$ between $10^{-2}$ and $10^{-4}$.

\begin{figure}[ht]
\centering
\includegraphics[width=0.95\linewidth,clip,trim=0 41 0 13]{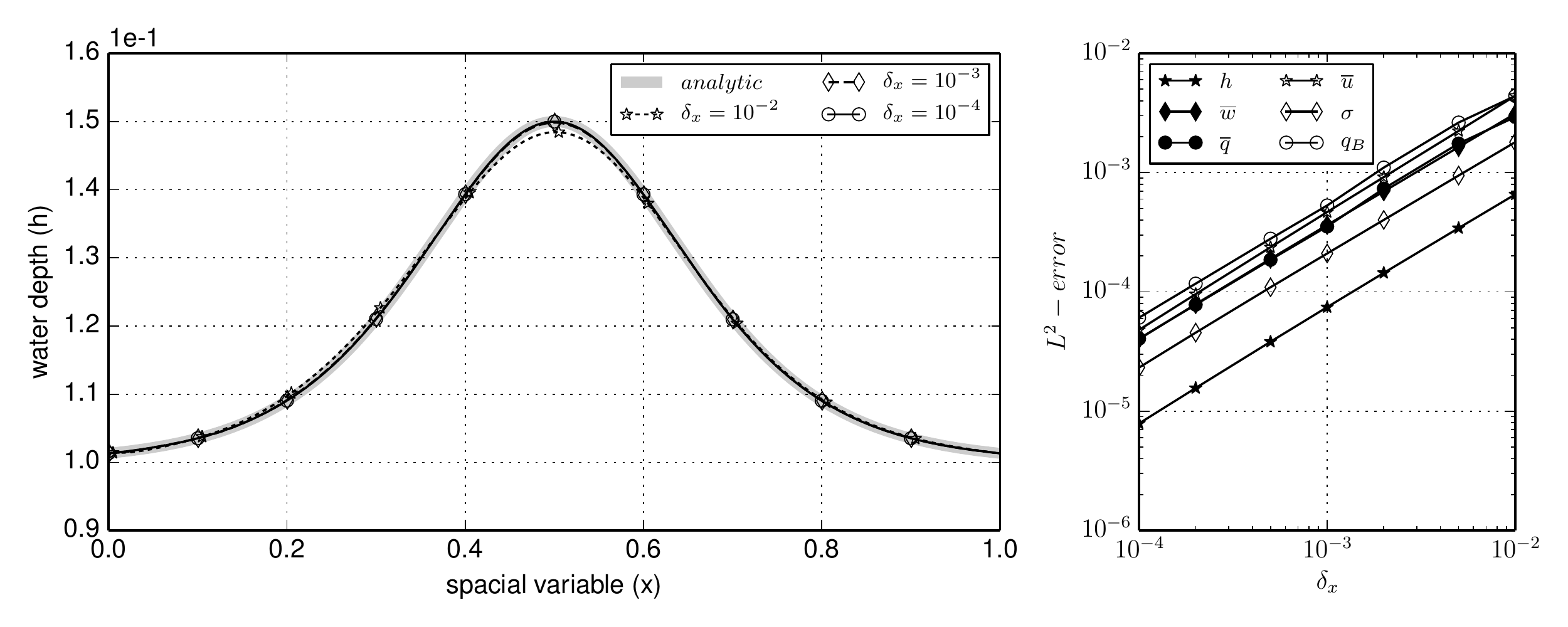}
\includegraphics[width=0.95\linewidth,clip,trim=0 13 0 13]{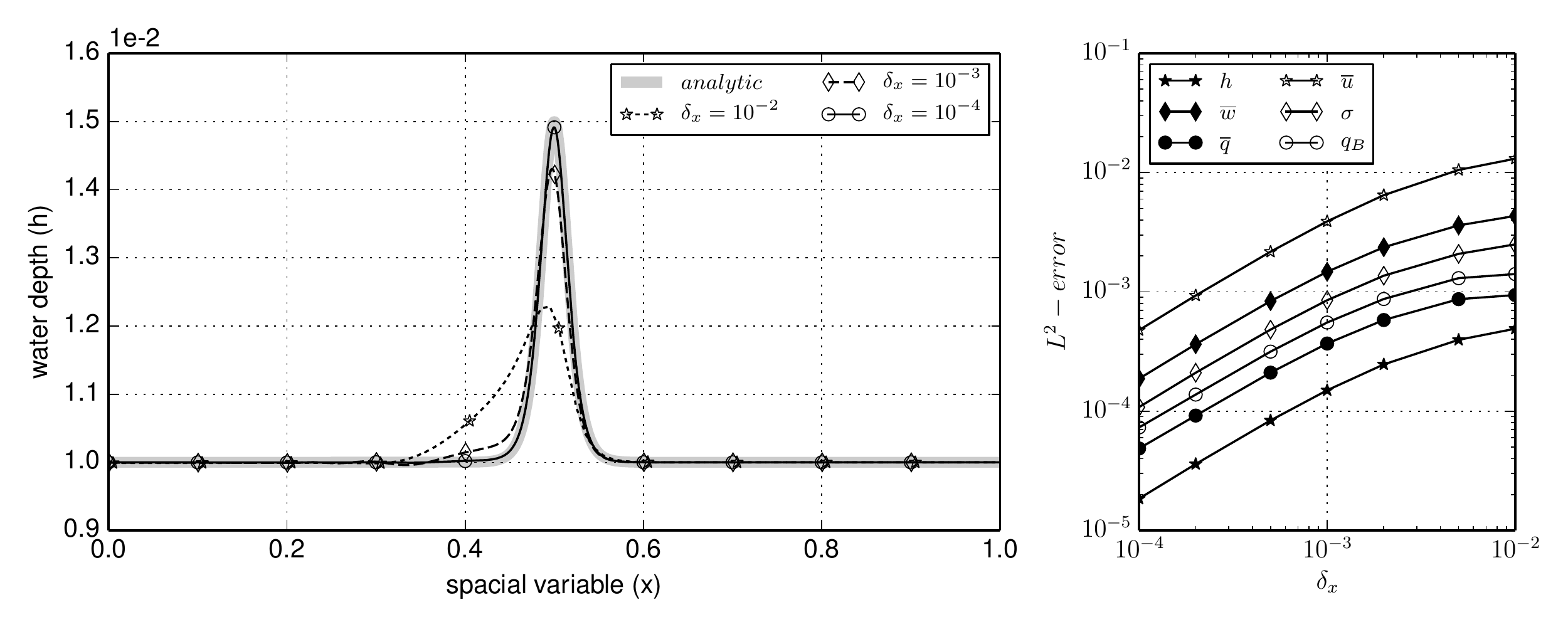}
\caption{\label{fig:BC_H_U} 
Numerical evidence (A): Exact and approximate solutions for initial soliton~\eqref{soliton1}  with parameter $A=0.5$, $X=-0.5$ and $H=10^{-1}$ (upper) or $H=10^{-2}$ (lower) at time $t=\sqrt{\frac{1}{(1+A)gH}}$.
Water levels (left) and spatial $L^2$-error as function of the mesh size (right).}
\end{figure}

\figref{fig:BC_H_U} shows the numerical solutions and the exact solution when the wave has traveled to the center of the computational domain.
The upper plots visualize the simulations of the spread wave for $H=10^{-1}$, whereas the plots  below contain the simulations of the stiff wave for $H=10^{-2}$. 
The water levels are plotted on the left.
The plots on the right-hand side show the $L^2$-errors between the computed solution and the analytical solution of each unknown, given by
\begin{align*}
\left(\delta_x\sum_{k\in\T}\l|\phi_k^n-\phisol\left(t^n,\left(k-0.5\right)\delta_x\right)\r|^2\right)^{1/2}  \quad \text{ for } t^n = t. 
\end{align*}

We observe that the wave profile is well recovered for the spread wave for $H=10^{-1}$, even though for a coarser meshes some numerical diffusion can be observed, see \figref{fig:BC_H_U} top left. 
The convergence rate of the $L^2$-errors is first order as expected, see \figref{fig:BC_H_U} top right. 
In the case of the more localized wave for $H=10^{-2}$, the numerical diffusion is larger, see \figref{fig:BC_H_U} bottom left. 
Still, the same amount of numerical diffusion is observed for the simulation of a steady soliton, see~\cite{Parisot19}. 
This means that the effect is not due to the boundary conditions and hence it confirms that the boundary conditions are suitably chosen. 
The first order convergence rate is recovered for sufficiently small mesh sizes, see \figref{fig:BC_H_U} bottom right.

\subsubsection{First variant: the wall boundary condition}\label{BC_wall}
The widely used wall boundary condition is a special case of the boundary conditions described in~\eqref{eq:bc-hu}. 
Indeed, the wall boundary condition is recovered by setting $\bch_f^n=h_\ki^n$ and $\bcu_f^n=0$. 
Thanks to the centered reconstruction with $\alpha_f=0.5$, the ghost cell values are given by 
\begin{equation*}
h_\kg^n=h_\ki^n
\qquad\textrm{and}\qquad
\u_\kg^n\cdot\n_\ki^\kg=-\u_\ki^n\cdot\n_\ki^\kg.
\end{equation*}
For any numerical solver satisfying the Galilean invariance this leads to a vanishing mass flux $\F_f^h\cdot\n_\ki^\kg=0$, and hence no additional data is required for the advection step.
For the correction step by~\eqref{eq:bc-h-u-cor} the condition on the hydrostatic pressure reads
\begin{equation*}
h_\kg^{n+1}\q_\kg^{n+1}
=h_\ki^{n+1}\q_\ki^{n+1}.
\end{equation*}
Several simulations with wall boundary condition have been presented in~\cite{Parisot19}. 

\subsubsection{Second variant: the transparent condition}\label{BC_H_U_transparent}

A further application of boundary conditions of the form~\eqref{eq:bc-hu} consists in mimicking transparent boundary conditions. 
Transparent boundary conditions are used when waves are supposed to leave the computation domain without being affected by the boundary, e.g., by reflections. 
A numerical scheme for the Green--Naghdi model subject to transparent boundary condition is proposed in~\cite{KN.2020}. 
Here we suggest an alternative, which we expect to be simpler to implement at the cost of being less accurate with respect to reflections at the boundary. 

For the shallow water model a simple strategy consists in imposing Neumann boundary conditions for both water depth and normal velocity
\begin{equation*}
\bch^{n+1}_f=h_\ki^{n*}=h_\ki^{n+1}
\qquad\textrm{and}\qquad
\bcu^{n+1}_f=\u_\ki^{n*}\cdot\n_\ki^\kg.
\end{equation*}
Also higher order Neumann boundary condition can be considered, see~\cite{Coulombel20}. 
 Conceptually, transparent boundary conditions are of interest on parts of the boundary where outflow occurs. 
If, however, the flow is incoming, then one has to specify the remaining components of the velocity by~\eqref{BC_upwind}, for example by setting $\bcv_f^n=0$ and $\bcw_f^n=0$. 
The conditions on the hydrostatic pressure remain as in~\eqref{eq:bc-h-u-cor}. 

\paragraph*{Numerical evidence (B).}
To illustrate the numerical strategy we compare the computed solution with the analytical solitary wave~\eqref{soliton} with the parameters $H=10^{-2}$, $A=0.5$ and $X=0.5$. 
We choose the initial data in a way that the center of the wave coincides with the center of the computational domain initially. 
Then, we simulate the propagation until the wave has left the computational domain at $t=\sqrt{\frac{1}{(1+A)gH}}$. 
The numerical solutions are computed for several spatial mesh sizes $\delta_x$ between $10^{-2}$ and $10^{-4}$.
We use the transparent boundary condition as described above on both boundary faces $L$ and $R$. 

\begin{figure}[ht]
\centering
\includegraphics[width=0.97\linewidth,clip,trim=0 28 0 12]{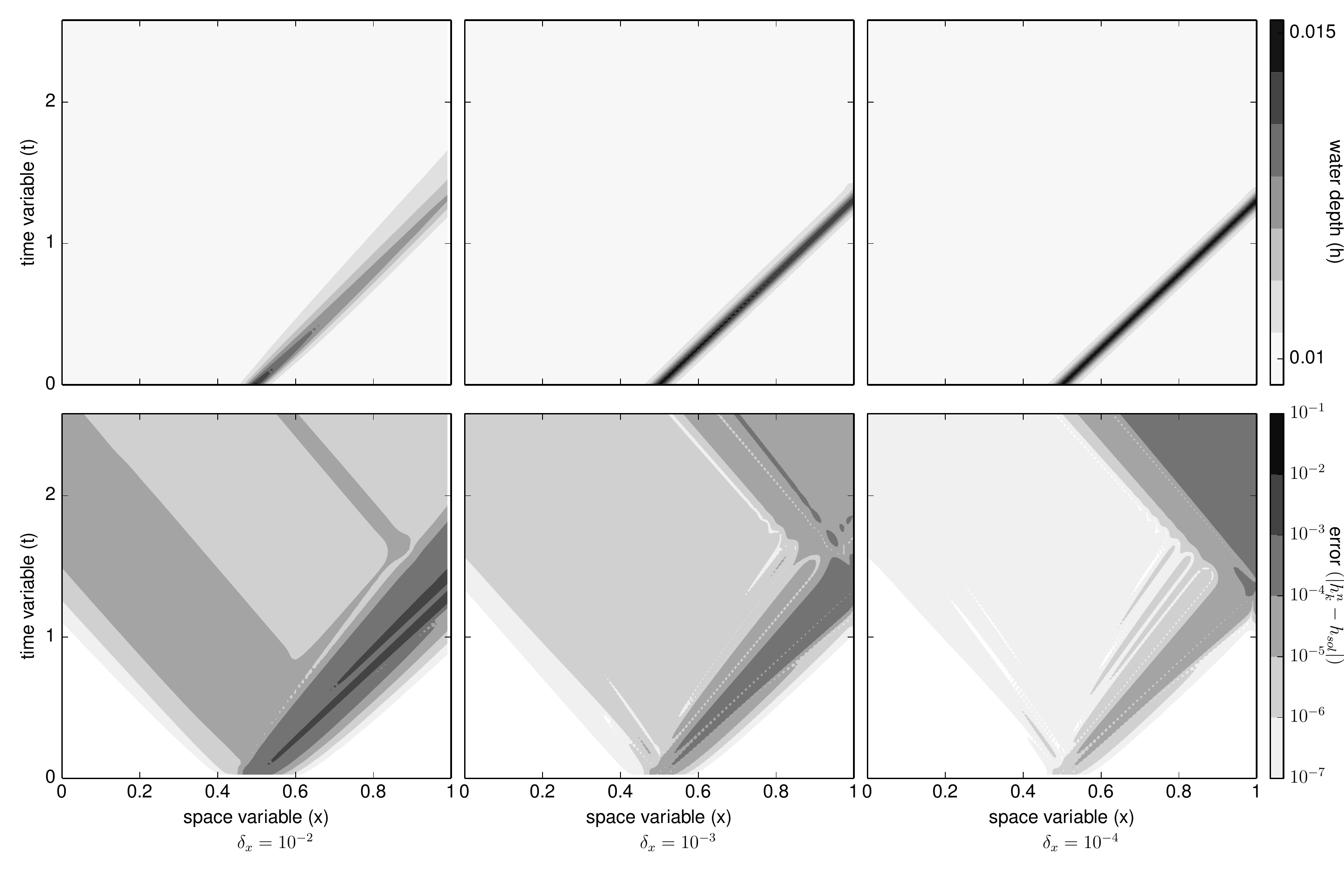}
\caption{\label{fig:BC_W_T_1d-2}
Numerical evidence (B): 
Simulations for initial soliton~\eqref{soliton1} with parameter $A=0.5$, $X=0.5$ and $H=10^{-2}$. 
Intensity graphs of the water depth (top) and the logarithmic error of the water depth (bottom) on the $(x,t)$-plane for spatial mesh size $\delta_x=10^{-2}$ (left), $\delta_x=10^{-3}$ (middle) and $\delta_x=10^{-4}$ (right).
}
\end{figure}

The results are presented in \figref{fig:BC_W_T_1d-2} as intensity charts over the $\left(x,t\right)$ plane. 
The upper row shows the water depth and the lower one the error of the water depth with logarithmic scaling. 
The columns correspond to different choices of resolution with $\delta_x=10^{-2}$ in the left, $\delta_x=10^{-3}$ in the middle and $\delta_x=10^{-4}$ in the right column.

 We note that the behavior of the solution is globally well recovered for each of the mesh parameters, in the sense that the wave leaves the domain without significant perturbation or numerical instability, see \figref{fig:BC_W_T_1d-2} top. 
The error plots (bottom) allow for a more detailed comparison. 
Clearly the left boundary condition works well.
 This is to be expected because the initial function is sufficiently flat at the left boundary. 
But also when the flow is incoming the transparent boundary seems to be robust.  
On the right boundary the flow is outgoing and we observe small reflections.
For coarse meshes with $\delta_x=10^{-2}$, the numerical diffusion in the domain has a larger effect than the reflection wave. 
For fine meshes with $\delta_x=10^{-4}$ on the other hand, the effect of the reflection wave is larger than the numerical diffusion. 
Still, the error due to reflection does not exceed the order of the resolution $O\left(\delta_x\right)$. 
Note that the error on the right boundary increases with decreasing $\delta_x$.
This is due to the fact that the amplitude of the reflection wave is usually proportional to the amplitude of the approximate gradient of the leaving wave.
When the spatial mesh size decreases, then the gradient of the leaving wave increases since the numerical diffusion is lower.
Similar effects can be observed for the shallow water model when a similar strategy for the transparent boundary condition is applied. 

\subsection{Fixed discharge and hydrodynamic pressure}\label{BC_M_Q}

In this section we propose to prescribe the discharge $h \u \cdot \nu$ and the hydrodynamic pressure $h \q$ at the boundary under the assumption that $\u_\ki^{n}\cdot \n_\ki^\kg \neq 0$ on the interior cell. 
For given discrete real-valued data $\bchu_f^n$ and $\bchq_f^n$, we set
\begin{subequations}\label{eq:bc-disc-pres}
\begin{equation*}
h_\kg^n\u_\kg^n\cdot \n_\ki^\kg
=
2\bchu_f^n-h_\ki^n\u_\ki^n\cdot \n_\ki^\kg
\qquad\textrm{and}\qquad
h_\kg^n\q_\kg^n
=
2\bchq_f^n-h_\ki^n\q_\ki^n,
\end{equation*}
on the respective boundary faces. 
This does not yield all the ghost cell values required in the advection step.
However, since the hydrodynamic pressure is prescribed the respective faces are contained in $\Gammad_{hq}$. Thus, by~\eqref{NumBc} for the normal velocity we have that 
\begin{equation*}
\u_\kg^{n}\cdot \n_\ki^\kg
=
\u_\ki^{n}\cdot \n_\ki^\kg.
\end{equation*}
In combination with the condition on the discharge, the water depth is given by
\begin{equation*}
h_\kg^n
=
2\frac{\bchu_f^n}{\u_\ki^{n}\cdot \n_\ki^\kg}-h_\ki^n,
\end{equation*}
since $\u_\ki^{n}\cdot \n_\ki^\kg \neq 0$. 
Note that the condition on $h^n_{k_g}$ is not linear.
Since $h^n_{k_g}$ is not needed in the correction step, but only in the explicit advection step, this does not cause any issues. 
Also note that the water depth in the ghost cell is positive provided that
\begin{equation*}
\sign\left(\bchu_f^n\right)=\sign\left(\u_\ki^{n}\cdot \n_\ki^\kg\right)
\qquad\textrm{and}\qquad
\l|\bchu_f^n\r|>\frac12h_\ki^n\l|\u_\ki^{n}\cdot \n_\ki^\kg\r|.
\end{equation*}
\end{subequations}
For sufficiently regular discharge at the boundary this assumption is satisfied.

\paragraph*{Numerical evidence (C).}
It is well known that there exist non-trivial space-periodic steady solutions to the Green--Naghdi model, the so-called cnoidal waves, see~\cite{Cienfuegos11}.
By fixing the water depth and the velocity as for the strategy proposed in~\secref{BC_H_U}, with stationary boundary data and flat bottom it is not possible to obtain solutions other than the trivial solutions.
In contrast, the strategy described in~\secref{BC_M_Q} offers this possibility. 

Let us describe the test case in detail. 
We initialize the flow with a flat free surface with $h^0\left(x\right)=10^{-1}$ and a constant horizontal velocity $\u^0\left(x\right)=5\cdot10^{-1}$. 

On the right boundary face $R$ we prescribe boundary values according to the strategy presented in~\secref{BC_H_U}, setting $\bch_R=10^{-1}$ and $\bcu_R=5\cdot10^{-1}$. 
Since the flow is outgoing no additional values are required.
At the left boundary face $L$ we prescribe the discharge and the hydrodynamic pressure using the strategy presented above in \secref{BC_M_Q} with $\bchu_L=5\cdot10^{-2}$ and several values for $\bchq_L$. 
Since on the left boundary the flow is incoming we have to prescribe also the vertical velocity and we set $\bcw_L=0$.

\begin{figure}
\centering
\includegraphics[width=0.97\linewidth,clip,trim=0 13 0 13]{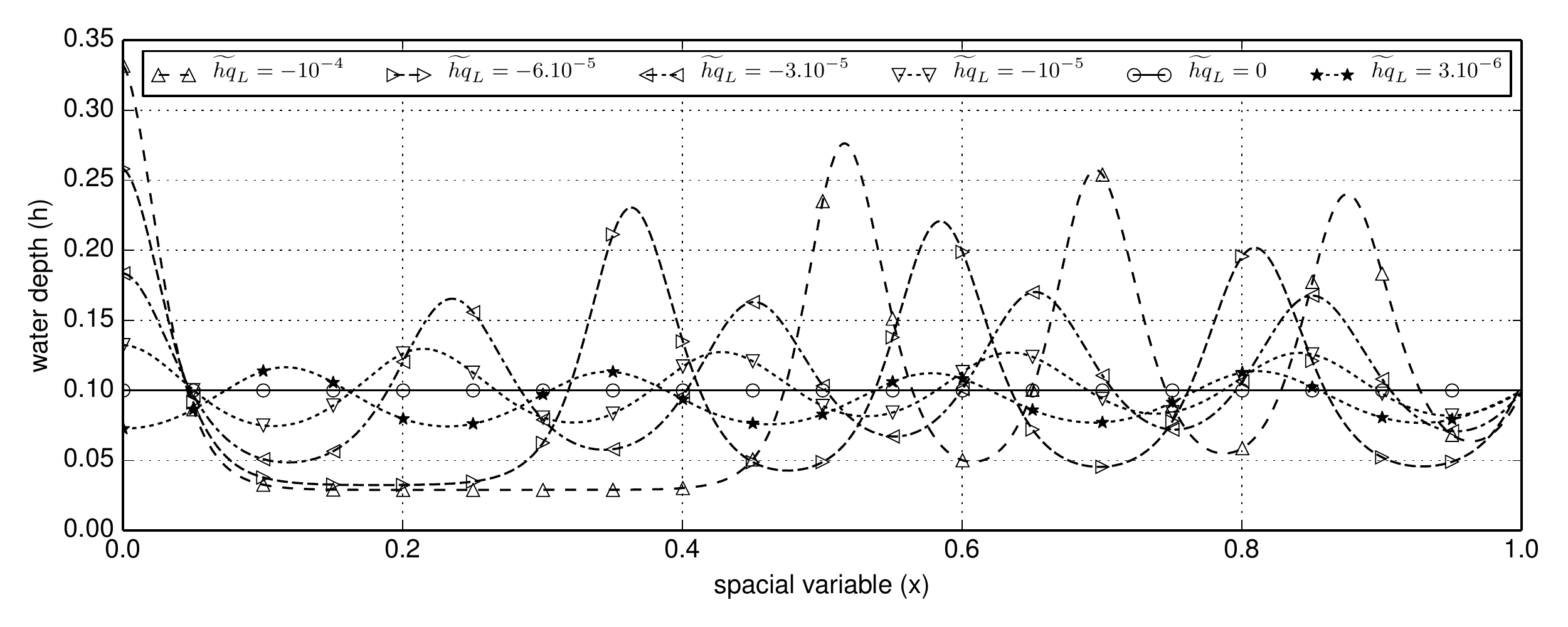}
\caption{\label{fig:BC_M_Q}
Numerical evidence (C):
Simulations for imposed discharge and hydrodynamic pressure at the inlet $L$ for several values of $\bchq_L$.  Water depth at time $t=10$.
}
\end{figure}

\figref{fig:BC_M_Q} shows the water depth of the approximate solutions at time $t=10$ computed with $\delta_x=5\cdot10^{-4}$ for several values of the hydrodynamic pressure at the left boundary. 
Note that these solutions are not steady state solutions and in fact we do not know whether there exist a steady solution satisfying the same boundary conditions.
Still, let us compare our approximate solution to what is known about the steady cnoidal wave solutions in a qualitative manner. 

The steady solutions are periodic, which seems to be the case also for our simulations provided that the value imposed on the  hydrodynamic pressure on the left boundary is sufficiently small with $\abs{\bchq_L}\le3\cdot10^{-5}$.
For larger values imposed on the hydrodynamic pressure at the left boundary the amplitude of the wave decreases in $x$ due to numerical diffusion. 
For $\bchq_L\le-6\cdot10^{-5}$ the solution has a low plateau which has already been observed in~\cite{Cienfuegos11}.

Let us comment on the behavior of the approximate solutions close to the left boundary.
For a sufficiently smooth steady solution by constraint~\eqref{eq:GNconstraints} the gradient of the water depth is linked to the value of the imposed vertical velocity $\bcw_L$. 
In our case $\bcw_L=0$ we conclude that the free surface has to be flat at the left boundary, which matches with the simulations. 
Furthermore, one can link the hydrodynamic pressure to the second derivative of the water depth function. 
More precisely, at the extreme point of the water depth function the second derivative of the water depth function is proportional to the hydrodynamic pressure for the steady state. 
Consistently, we observe that if $\bchq_L<0$, then the water depth function has a maximum at the left boundary and vice versa.

\subsection{Simulation on a beach}\label{BC_dry}

The last type of boundary investigated in this article is the dry front. 
This is a special type of boundary because it is   not a boundary in the advection step but only in the correction step.
As explained before, this boundary is detected after the advection step and treated by setting $\u_k^{n+1}=0$ and $h_k^{n+1}\q_k^{n+1}=0$ in any cell $k  \in \mathbb{T}$ with $h_k^n=0$. 
This strategy was already employed and numerical evidence was presented in~\cite[\S4.4]{Parisot19}.
Therein, the dry front is treated with a zero velocity based on the argument that this choice ensures entropy stability.
Here, we establish the more fundamental discrete projection property, which implies entropy stability by construction. 
Hence, the numerical treatment of the dry front is the same in both cases and we shall refrain from repeating it.

\section{Conclusion}\label{sec:conclusion}

We have characterized and investigated a class of boundary conditions for the time-discrete Green--Naghdi equations with bathymetry. 
Several boundary conditions relevant for practical studies such as dry areas, transparent boundary conditions and wave generating boundary conditions can be formulated in this setting. 
The types of boundary conditions we propose are designed in a way that for the correction step the projection property is preserved for bounded domains and homogeneous boundary conditions, as is standard for incompressible fluid equations. Furthermore, for inhomogeneous boundary conditions the resulting equations in the correction step are independent of any reference functions.
We obtain well-posedness for the time-discrete and the fully discrete correction step and the underlying 
entropy inequality is verified. 
The latter ensures robustness of the numerical scheme and the approach is expected to extend to higher order schemes. 
Note that for the sake of simplicity we have presented the most basic collocated scheme in the fully discrete case. 
Indeed, the objective is to illustrate the benefits of our general approach rather than to design the best possible numerical scheme within this framework. 
However, with this choice we have not aimed for a discrete inf-sup condition independent of $\delta_t^n$. 
Indeed, the factor $\delta_t^n$ may cause spurious modes to appear in the pressure functions for small time steps, see, e.g.,~\cite{Guermond06,GQ.1998} for a discussion for other projection methods. 
But the accurate recovery of the pressure is not our main focus. 
In fact the pressure functions do not have to be computed in each time step since they do not enter the following time step because our scheme is non-incremental. 
In frequently used formulations of the Green--Naghdi equations the pressure functions do not even appear and hence seem to be of lesser interest. 
Note also that the investigation of the discrete inf-sup condition is hampered by the fact that stationary solutions are not well-understood yet and the inner product changes with $h$ in every time step. 

This leads us to several theoretical and numerical open questions. 
Firstly, recall that our analysis starts from the time discrete model. 
For the fully continuous equations it is not clear that a certain projection property is available, since the water depth can not be considered as a parameter in this case and the constraints are nonlinear. 
Investigating convergence of approximate solutions of the time-discrete problem as $\delta_t^n \to 0$  would not only justify the semi-discretization. 
Also it might allow for first insights into the boundary conditions for the full system of equations, which is currently an open problem. 
Furthermore, some technical issues are beyond the scope of this work such as dry areas in the space-continuous setting and the problem of moving bathymetry at the boundary. 
Regarding the first, one would have to work with weighted Sobolev spaces and assume or prove sufficient regularity for the degenerating water depth. 
For the latter, even in the time-discrete case the constraints are only affine, and hence the choice of suitable reference functions is less straightforward. 

Concerning the fully discrete problem also the investigation of the stationary solutions of the Green--Naghdi equations is of interest. 
In particular, the projection structure represents a promising starting point for the design of well-balanced schemes. 
Furthermore, since the framework seems to allow for a broad range of solutions, we are one step closer to the investigation of solutions for which the energy is not conserved.
From a more practical point of view also certain adaptive strategies are within reach, for which the projection step is only computed in those parts of the domain where it is really needed.
However, the implementation of such a local strategy requires a range of numerical parameters and the optimal choice does not seem to be obvious. 

Last but not least, let us remark that the notion of solutions that preserves the projection structure in bounded domains is most likely not the only sensible one.
It might be possible to work with alternative notions of solutions to the problem on bounded domains leading to well-posedness.

\paragraph*{Acknowledgements.}
We would like to express our sincere thanks to Thierry Gallou\"et for his valuable comments.
He greatly helped us understand some intricate points related to the weak formulation of the boundary conditions. 
This work was initialized during a stay of M. Parisot at the Aachen University founded by the DAAD program and the RWTH Aachen University. 
S. Noelle and T. Tscherpel were partially funded by the DFG project GRK2326~'Energy, Entropy, and Dissipative Dynamics (EDDy)'.

\bibliographystyle{acm}
\bibliography{lib-GN-bd}

\end{document}